\title{Distributed recovery of jointly sparse signals\\ under communication constraints}   
\author{Sophie   M.  Fosson$^{\star   }$   \qquad   Javier Matamoros$^{\dagger}$ \qquad  Carles Ant\'{o}n-Haro$^{\dagger}$ \qquad Enrico  Magli$^{\star}$\\  $^{\star}$  Department of  Electronics  and
Telecommunications,     Politecnico    di     Torino     (Italy)    \\ $^{\dagger}$Centre  Tecnol\`{o}gic de Telecomunicacions  de Catalunya, Barcelona  (Spain)\thanks{This work is supported by the European Commission in the framework of the FP7 Network of Excellence in Wireless COMmunications, Grant agreement n.318306, by the European Research Council under FP7 / ERC, Grant agreement n.279848 - CRISP project, by the Spanish Government through the project  INTENSYV (TEC2013-44591-P), and by the Catalan Government (2014 SGR 1567). } 
}
\newcommand{\cardV}{V}
\newcommand{\fun}{\mathcal{F}}
\newcommand{\sun}{\mathcal{R}}
\newcommand{\soft}{\mathbb{S}}
\newcommand{\softmod}{\mathbb{S}}
\newcommand{\V}{\mathcal{V}}
\newcommand{\NE}{\mathcal{N}}
\newcommand{\R}{\mathds{R}}
\newcommand{\N}{\mathds{N}}
\newcommand{\argmin}[1]{\underset{#1}{\mathrm{argmin\,}}}
\newcommand{\mw}{\overline{w_{v,i}}}
\newcommand{\mwt}{\overline{w_{v,i}}(t)}
\newtheorem{lemma}{Lemma}
\newtheorem{proposition}{Proposition}
\newtheorem{theorem}{Theorem}
\newtheorem{assumption}{Assumption}
\begin{document}

\maketitle

\begin{abstract}
The problem of the distributed recovery of jointly sparse signals has attracted much attention recently. Let us assume that the nodes of a network observe different sparse signals with common support; starting from linear, compressed measurements, and exploiting network  communication, each node aims at reconstructing the support and the non-zero values of its observed signal. In the literature, distributed greedy algorithms have been proposed to tackle this problem, among which the most reliable ones require a large amount of transmitted data, which barely adapts to realistic network communication constraints. In this work, we address the problem through a reweighted $\ell_1$ soft thresholding technique, in which the threshold is iteratively tuned based on the current estimate of the support. The proposed method adapts to constrained networks, as it requires only local communication among neighbors, and the transmitted messages are indices from a finite set. We analytically prove the convergence of the proposed algorithm and we show that it outperforms the state-of-the-art greedy methods in terms of balance between recovery accuracy and communication load.
\end{abstract}
%
\section{Introduction}\label{sec:intro}
The recovery of jointly sparse signals has received great attention in the last few years. By ``jointly sparse'' we mean signals that are sparse (\emph{i.e.}, have few non-zero components) with same support (\emph{i.e.}, the positions of the non-zero components are common for all the signals). Measurements of such signals are assumed to be taken by the nodes of a network;  given the measurements, the aim of each node is to estimate the common support  and eventually evaluate  the non-zero components. The study of this problem is motivated by diverse applications, among which one of the most outstanding is spectrum sensing in cognitive radio networks \cite{baz10, zen11}, which consists in the detection of the spectrum occupancy aimed to the dynamic reallocation of unused frequencies; as described in \cite[Section III.D]{zen11}, in some cases this problem reduces to the reconstruction of a common support. Other examples of jointly sparse representations, just to name a few among the most recent ones, arise from 
image 
features extraction \cite{nan11}, visual classification \cite{yua12}, speech recognition \cite{wei13}, and biometrics recognition \cite{she14}.

In several applications, measurements are linearly acquired and compressed \cite{baz10,zen11}, according to the distributed compressed sensing (CS) paradigm \cite{bar05, dua05}.  CS \cite{don06} states that a sparse signal $x\in\R^n$ can be recovered from measurements $y=Ax$ where $A\in\R^{m,n}$ is a suitable matrix with $m<n$, called sensing matrix. In a distributed context, the acquisition is performed by a networked system: given a set $\V$ of nodes, each $v\in\V$ has its own measurement $y_v=A_v x_v$; the case when the $x_v$'s have common support is known as joint sparsity model 2 (JSM-2, \cite{dua05}). Concerning the recovery methods, centralized and distributed methods have to be distinguished. The first ones assume the presence of a fusion center that  gathers all the information from the network (namely, measurements and sensing matrices) and processes them to recover the signals. In the case that all the sensing matrices are equal, these methods can be recast in the multiple measurement 
vectors framework (MMV) \cite{che06}, for which theoretical recovery guarantees have been provided \cite{che06, dav12}. More insight on the recovery methods for MMV can be found in very recent papers such as \cite{kyr12,bla14}. The distributed recovery methods, instead, perform the reconstruction in-network, with no fusion center,  only exploiting the computational and (local) communication capabilities of the nodes. Distributed methods are remarkable as (a) they do not need the presence of a fusion center, which in many situations is not available or can be expensive to reach in terms of transmit power (sensor networks are often deployed over impracticable  territories for environment monitoring purposes); (b) they are more robust to failures: if a fusion center breaks down, the recovery process stops, while if a distributed algorithm is run in-network, typically the failure of some nodes is tolerated.

The development of distributed recovery algorithms for JSM-2 is  our purpose. The literature on this argument is very recent. First attempts \cite{lin11}, \cite[Section III.D]{zen11} went in the direction of decentralizing group Lasso techniques \cite{yua06}, but no convergence guarantees were provided. Distributed greedy algorithms  were then studied: in \cite{sun14}, distributed versions of subspace pursuit (SP) and orthogonal matching pursuit (OMP) were developed, the second one (called DiOMP) being more promising in terms of recovery performance. The support recovery accuracy of DiOMP is comparable to that of DiT in \cite{fox14}, which is the first distributed algorithm based on iterative thresholding for JSM-2.  Almost at the same time,  in \cite{wima14} DC-OMP 1 was proposed, which is very similar to DiOMP, but more accurate in the support detection. A second algorithm was proposed in \cite{wima14}, named DC-OMP 2, which recovers the support much more accurately than DC-OMP 1,  at the price of a greater communication load. To the best of our knowledge, DC-OMP 1 and DC-OMP 2 represent the state of the art in the framework of distributed algorithms for JSM-2 and will be considered as benchmark in this work; in the following, we will describe them more in detail.
The aim of this paper is to present a new approach to the distributed recovery of jointly sparse signals, based on concave penalization and reweighted $\ell_1$ minimization. More precisely, we will develop a distributed soft thresholding in which the threshold is iteratively updated,  based on the support estimate. With our method,  communication can be strongly reduced with respect to DC-OMP 2 (with no performance loss), being limited to the local communication of the indices of the components that have switched from non-zero to zero or vice versa. In other terms, our algorithm will be efficient even under strict communication constraints, due to the network technology or for energy saving purposes. Our algorithm will be proved to converge to a minimum of suitable cost functional, and performance will  be shown via numerical simulations. 


The paper is organized as follows.  In Section \ref{sec:model}, we will describe the model, and in Section \ref{sec:optimization} we will establish our optimization problem. In Section \ref{sec:algorithm}, we will present and discuss our algorithm. In Section \ref{sec:asymp_reg} we will prove the numerical convergence and the stabilization of the support estimate, while  the convergence of  the non-zero components will be discussed in Section \ref{sec:convergence}. Numerical results will be then shown in Section \ref{sec:numerical_results}, along with an analysis of the transmission costs. Finally, some conclusions will be drawn.
  
Before proceeding, we anticipate some notation that will be used throughout the paper.
\subsection{Notation}\label{par:notation}
We denote by $\mathds{1}$ the indicator function: for any integer $n\geq 1$,  $\mathds{1}:\R^n\mapsto\R^n$ is given by  $[\mathds{1}(x)]_i=1$ if $x_i\neq 0$, while  $[\mathds{1}(x)]_i=0$ if $x_i= 0$, $i=1,\dots,n$. $\mathbf{1}$ indicates the column vector whose components are all equal to 1.
We define the $l_0$-norm of a vector $x\in\R^n$ as $\left\|x\right\|_0=\left\|\mathds{1}(x)\right\|_2^2$, or equivalently  $\left\|x\right\|_0=\mathbf{1}^{\mathsf{T}}\mathds{1}(x)$, where $\mathsf{T}$ indicates the transpose. $I$ is the identity matrix. Moreover, we  call weighted $l_p$-norm of $x$ the quantity $\left\|W x\right\|_p$ where $W$ is a  weight matrix, namely a diagonal matrix with diagonal entries $W_i>0$, $i=1,\dots,n$. Given a graph  $\mathcal{G}=\left(\mathcal{V},\mathcal{E}\right)$, for any node $v\in\V$,  $\mathcal{N}_v:=\{w\in\V \text{ s.t. } (v,w)\in\mathcal{E}\}$ is the neighborhood of $v$. Let $d_v$ be the degree of $v$, say  the number of neighbors of $v$, included $v$ itself. Given any variable $x_v$ associated with $v$, we   indicate its local average with an overline: $\overline{x}_v:=\frac{1}{d_v}\sum_{w\in\mathcal{N}_w}x_w$  (we remark that $:=$ denotes ``is defined as'').
\section{Network model}\label{sec:model}
In this section, we describe the acquisition and communication model of interest.

We consider  a network composed  of $V$  nodes, whose  connectivity is described  by  the  graph   $\mathcal{G}=\left(\mathcal{V},
\mathcal{E}\right)$  with  $|\mathcal{V}|=V$.  Accordingly,  the node $v$  can communicate  with  $v'$ if and  only if $\{v,v'\}\in \mathcal{E}$ or, in  other words, if $v'$ belongs to its neighborhood set $\mathcal{N}_{v}$.

Following the CS paradigm, each node  observes a compressed version of  a $k$-sparse signal $\{x^{\star}_v\}_{v\in\mathcal{V}}  \in  \mathds{R}^n$  through a  set  of linear measurements, namely
\begin{equation}\label{model}
\begin{split} &y_v=A_v x^{\star}_v,\quad v\in \mathcal{V}
\end{split}
\end{equation}
where $A_v\in\R^{m\times n}$ (with $m < n$)  and the signals $\{x_v\}_{v\in\mathcal{V}}$ have  the   same  support $\Omega$,   that  is, for all  $v\in\V$,  $ \Omega_v:=\left\{i\in\{1,\dots,n\}|x^{\star}_{v,i}\neq 0\right\}=\Omega$.  In the next, we will equivalently refer to the support of $x_v$ as the binary vector $\mathds{1}(x_v)$. 

A measurement noise term can be added in \eqref{model} to have a more realistic setting.  If we assume an additive white Gaussian noise (a popular choice in a number of  applications), the formulation and the approach to the problem do not change with respect to the noiseless case, as we consider the  least squares paradigm, which in both cases considers the minimization of the residual.

The  ultimate  goal  of  each  node $v\in\V$ is the  reconstruction of its observed signal $x_v$. A fusion center is not envisaged in our model, thus the reconstruction task  has to be performed in-network by the nodes themselves. Moreover, we assume that no information about $A_v$ and $y_v$ can be shared, \emph{e.g.}, for privacy reasons and to reduce the amount of transmitted data. Since the transmission load is often a dramatic drawback in distributed procedures, we impose a second constraint on the communication protocol: messages must belong to a finite set of integers, specifically $\{1,\dots,n\}$. This should adapt to our purpose: since the support is the common quantity, it should be sufficient to share information about the support of each component, which is a binary message. In other terms, for each component $i$ a node would communicate its status, that is, if in its current estimate $i$ is in the support or not; assuming that the other nodes can store such information, it is sufficient to send the 
value $i$ when the status has changed. For each sent message, we then need only  $\lfloor \log_2 n \rfloor +1$ bits, which generally is significantly smaller than the number of bits used to transmit a real number, even if coarsely quantized. 

Let us summarize these communication constraints.

\begin{assumption}\label{comm_constraints}
The communication over the network is local, and only messages in $\{1,\dots,n\}$  can be transmitted by each node to the neighbors.
\end{assumption}

It is well known that, in the CS context, the challenge is the identification of the signal support; once this is done, the estimate of the non-zero components could be readily performed through the classical least squares estimation (assumed the number of measurements is larger than the sparsity). For this motivation, in the literature \cite{sun14, wima14} the detection of the signal support is approached separately. Our proposed method instead will envisage both support and non-zero values recovery in the same algorithm.

\section{Optimization problem}\label{sec:optimization}
Given the network model presented in Section \ref{sec:model}, we now describe our recovery problem in terms of an optimization problem, that takes into account the network constraints of Assumption \ref{comm_constraints}.  Our final purpose is the development of a distributed recovery algorithm that leverages iterated sharing of information about the support.

In the context of sparse recovery, the  $\ell_1$ convex minimization problem, known as Lasso, is very popular for its mathematical feasibility. The principle behind Lasso is that $\ell_1$ norm well approximates the $\ell_0$ norm and allows to transform the recovery problem into a convex problem.  Further, \emph{reweighted $\ell_1$ minimization} \cite{can08rew, wip10, che14} has been proposed, which iteratively retunes the weight of the $\ell_1$ norm based on the current signal's estimate. In this way, each component is weighted according to its expectation of belonging to the support. Different reweighting rules have been investigated in the literature, and will be discussed later. 

The reweighting principle seems to be suitable for distributed support detection: intuitively we can think of an $\ell_1$-reweighting minimization at each node, in which the reweighting rule depends on the individual current estimate and on the support information shared in the network. In other terms, we aim for a decentralization of reweighted $\ell_1$ minimization.

The rest of the section is devoted to develop this idea. We start with a review on (centralized) \emph{concave penalization}, which is the setting where the $\ell_1$ reweighting techniques are originated. Afterwards, we will illustrate how to decentralize this method, taking into account our model constraints (Assumption \ref{comm_constraints}).
\subsection{From Lasso to concave penalization}
As mentioned before, the problem of sparse signals' recovery  can be conceived as an $\ell_1$ convex minimization problem, known as Lasso: 
\begin{equation}\label{lasso}  
 \min_{x\in\R^n}\frac{1}{2}\left\|y-A x \right\|_2^2+\lambda \| x\|_1,~~~\lambda>0
\end{equation}
where $A\in\R^{m\times n}$, and $\lambda$ is a parameter to set.  As already said, the $\ell_1$ norm has been shown to well approximate the $\ell_0$ norm, and has the great advantage of transforming the problem from combinatorial to convex. However, Lasso has some drawbacks, namely its estimate is always biased (proportionally to $\lambda$), and conditions to have the oracle property (\emph{i.e.}, the capability of exactly recovering the support) are strict \cite{fan01_pioneer, zha06, wai09}. This has motivated the studies on different penalization techniques. In particular, much interest has been devoted to concave penalization techniques:
\begin{equation}\label{concave_penalization}  
\begin{split}
 &\min_{x\in\R^n}\frac{1}{2}\left\|y-A x \right\|_2^2+\lambda \sum_{i=1}^n g(|x_i|)\\&g:\R_+\to \R_+ \text{ concave, nondecreasing in } |x_i|.
\end{split}
\end{equation}

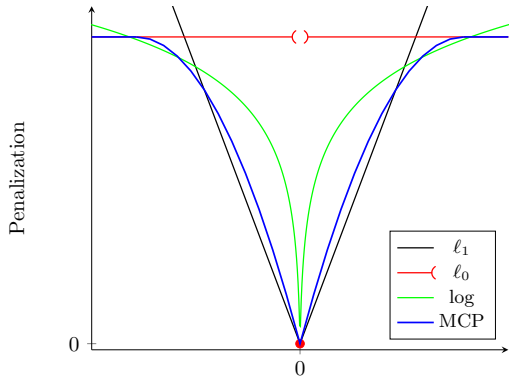
\begin{figure}
 	\begin{tikzpicture}[scale=0.8]
		\begin{axis}[xmin=-1.8,xmax=1.8,ymin=-0.02, ymax=1.1, xtick={0}, ytick={0}, ylabel={Penalization},axis lines=left, legend entries={\small{$\ell_1$}, \small{$\ell_0$} , ,\small{$\log$}, \small{MCP }},legend pos= south east]    
		\addplot[name path = funone, draw=black, mark=none, semithick] {abs(x)};
			\addplot[draw=red, mark=none, semithick, domain=-1.8:-0.05,-(,shorten >=-1pt] 
		{1};
			\addplot[ draw=red, mark=none, semithick, domain=1.8:0.05,-(,shorten >=-1pt] 
		{1};
		\draw[color=red, fill] (axis cs:0,0) circle (2pt);
 			\addplot[draw=green, mark=none, semithick,samples=1500] {0.2*ln( 100*abs(x)+1)}; 
 			\addplot[draw=blue, mark=none,thick,domain=-sqrt(2):sqrt(2)]{abs(x)*sqrt(2)-0.5*x^2} ; 
 			\addplot[ draw=blue, mark=none, thick, domain=-1.8:-sqrt(2)] {1};
 			\addplot[ draw=blue, mark=none, thick, domain=1.8:sqrt(2)] {1}	;
\end{axis}
	\end{tikzpicture}
	\caption{Examples of popular concave penalization functions, that are closer to $\ell_0$ than $\ell_1$. In this work, we focus on  MCP.}
	\label{fig:penalty}
\end{figure}
The rationale behind this is that concave functions approximate the $\ell_0$ norm  better than $\ell_1$, as one can appreciate in Figure \ref{fig:penalty}. Many contributions on concave penalization come from the statistical community, see, \emph{e.g.}, \cite{fan01_pioneer, fan11, fan14, zou08_LLA, zha10MCP, zha12, gas09}. In such papers, different concave $g$'s have been proposed, and conditions to have the oracle property and to reduce the Lasso bias have been studied, mainly in the asymptotic  case $n\to\infty$ \cite{fan01_pioneer, fan11}. Experimental and theoretical results attest that usually concave penalization outperforms Lasso \cite{fan01_pioneer, zha10MCP, zha12}. In the context of underdetermined linear systems, some works \cite{can08rew, faz03} apply the concave penalization to CS and matrix rank minimization with success.

The concave penalization problem \eqref{concave_penalization} is not mathematically straightforward: non-convexity makes it difficult to find  global solutions. However,  in many cases local minima are precise enough, and  can be reached via iterative methods based 
 on \emph{linear local approximation} (LLA) of $g$  \cite{zou08_LLA, can08rew, faz03}. Given a point $z_i\in\R_+$, the key idea of LLA is to substitute $g(|x_i|)$ around $z_i$ by its linearization $g(z_i)+g'(z_i)(|x_i|-z_i)$; thanks to concavity, $g$ is always below its linearization, which suggests the following procedure. Assuming that $z$ is the current estimate, we locally minimize \eqref{concave_penalization} substituting $g$ with its linearization. Removing the constant terms, we obtain:
\begin{equation}\label{concave_penalization_lla}  
 \min_{x\in\R^n}\frac{1}{2}\left\|y-A x \right\|_2^2+\lambda \sum_{i=1}^n g'(|z_i|)|x_i|.
\end{equation}
Let us suppose that  an estimate $z=x(t)$ is provided at current time $t\in\N$. Then, we can perform alternated minimization on \eqref{concave_penalization_lla}: 
\begin{equation}\label{algo:reweighting}
 \begin{split}
 &x(t+1)= \min_{x\in\R^n}\frac{1}{2}\left\|y-A x \right\|_2^2+\lambda \sum_{i=1}^n w_i(t)|x_i|\\
 &w_i(t+1)= g'(|x_i(t+1)|).\\
 \end{split}
 \end{equation}
This turns out to be is an iterative  reweighted $\ell_1$ minimization procedure. Such method  has been proved to reach a local minimum of the concave penalization functional, and in practice it is more accurate than Lasso global solution \cite{zou08_LLA, can08rew, faz03}. We remark  that no general guarantee of convergence for $x_i(t)$ is provided, but specific results hold for specific $g$'s. For example in \cite{che14}, convergence is proved for $g(|x_i|)=(|x_i|+\epsilon)^p$, with $p\in (0,1)$ and small $\epsilon>0$. 
 
In the literature, a variety of concave penalization functions have been investigated.  In \cite{can08rew} much attention is focused on the case $g(|x_i|)=\log(|x_i|+\epsilon)$, with small $\epsilon>0$. SCAD \cite{fan01_pioneer} and MCP \cite{zha10MCP} instead propose continuous quadratic penalizations: MCP is of the form $g(x_i)=\alpha|x_i|-\beta x_i^2$,  for $|x_i|< \frac{\alpha}{2\beta}$, $\alpha, \beta>0$, and constant otherwise; SCAD is like MCP plus a $\ell_1$ penalization term $\lambda|x_i|$  for small $|x_i|$. In the cited works, in-depth analyses and comparisons between the different $g$'s are proposed.

In conclusion, concave penalization provides us (a) a sparse recovery setting that outperforms Lasso, and (b) low complex algorithms, based on LLA, to find a solution. The LLA algorithms are nothing but reweighted $\ell_1$ schemes.
\subsection{Decentralization under communication constraints}
Our aim is to decentralize the problem \eqref{concave_penalization} and the algorithm sketched by \eqref{concave_penalization_lla}-\eqref{algo:reweighting} under communication constraints (Assumption 1). First of all, we notice that the natural way to write the optimization problem over the network is the summation of the individual functionals \eqref{concave_penalization} for each node $v\in\V$. Second, we observe that  the penalization is strictly linked to the support: as explained in \cite{can08rew}, in \eqref{concave_penalization_lla} we would desire larger $w_i$'s for the zero components, up to the ideal case when $w_i\to\infty$ for zero components, and $w_i\to 0$ for non-zero components. Since here signals have common support, it makes sense to compute $g$ over a common variable of the network, and the simplest choice is the mean. Summing up, we have:
\begin{equation*}
\begin{split}
\min_{x_v\in\R^n}&\sum_{v\in\V}\left\{\frac{1}{2}\left\|y_v-A_v x_v \right\|_2^2+\lambda \sum_{i=1}^n g\left(\frac{1}{\cardV}\sum_{v\in\V} |x_{v,i}|\right)\right\}.
\end{split}
\end{equation*}
Nevertheless, this would require global communication to update  $w$ in the procedure \eqref{algo:reweighting}, which is in contrast with Assumption \ref{comm_constraints} for non-complete graphs. We then use the best local approximation that we can conceive, that is, we substitute  $\frac{1}{\cardV}\sum_{v\in\V}|x_{v,i}|$ with the local sum $\frac{1}{|\NE_v|}\sum_{u\in\NE_v}|x_{u,i}|$. The corresponding functional is
\begin{equation*}
\begin{split}
\min_{x_v\in\R^n}&\sum_{v\in\V}\left\{\hspace{-0.07cm}\frac{1}{2}\left\|y_v-A_v x_v \right\|_2^2+\lambda \sum_{i=1}^n g\left(\frac{1}{|\NE_v|}\sum_{u\in\NE_v}|x_{u,i}|\right)\hspace{-0.1cm}\right\}\hspace{-0.05cm}.
\end{split}
\end{equation*}
In this way, each $v\in\V$ will have its own weight $w_v$, which will be reweighted using only local collaboration.


Finally,  according to Assumption \ref{comm_constraints} the transmission of real valued messages (such as $|x_{u,i}|$) is undesired. Therefore, we impose that each node $v$ cannot access $x_u$, $u\in\NE_v\setminus\{v\}$, but only their best ``binary approximation'', say  $\mathds{1}(x_{u,i}(t))$. We then substitute $|x_{u,i}|$ by  $\mathds{1}(x_{u,i})$, and obtain our ultimate minimization problem: given $X=(x_1,\dots,x_{\cardV})$, we write
\begin{equation}\label{ultimate_problem}
\min_{x_v\in\R^n}\fun (X)
\end{equation}
where
\begin{equation*}
\begin{split}
&\fun(X)=\hspace{-0.08cm}\sum_{v\in\V}\left\{\hspace{-0.08cm}\frac{1}{2}\left\|y_v-A_v x_v \right\|_2^2+\lambda \sum_{i=1}^n g\left(\alpha | x_{v,i}| +\overline{\mathds{1}(x_{v,i}})\right)\hspace{-0.08cm}\right\}\\
\end{split}
\end{equation*}
and  $\overline{\mathds{1}(x_{v,i})}=\frac{1}{|\NE_v\setminus{v}|}\sum_{u\in\NE_v\setminus{v}}\mathds{1}(x_{u,i})$\footnote{We remark that $\mathds{1}(x_{u,i})$ is a function of $|x_{u,i}|$, which guarantees that the current $g=g\left(\alpha | x_{v,i}| +\overline{\mathds{1}(x_{v,i}})\right)$ is still a function of the absolute values.}.  $\alpha>0$ is a tuning parameter: since we are summing quantities that are physically different (a magnitude $|x_{v,i}|$ and binary information), it could be useful to balance their contributions, \emph{e.g.} based on prior information on the energy of the signal. In practice, we have noticed that if each $v$ adds also  $\mathds{1}(x_{v,i})$, performance improves; therefore, in the following  we will use $\overline{\mathds{1}(x_{v,i})}=\frac{1}{|\NE_v|}\sum_{u\in\NE_v}(\mathds{1}(x_{u,i}))$.

Summing up, the LLA procedure applied to $\fun(X)$ originates the following decentralized reweighted $\ell_1$ minimization procedure: 
\begin{equation}\label{algo:my_reweighting}
 \begin{split}
 &x_v(t+1)= \min_{x_v\in\R^n}\fun_w(X)\\
 &w_{v,i}(t+1)= g'\left(\alpha | x_{v,i}(t+1)| +\overline{\mathds{1}(x_{v,i}(t+1))}\right)\\
 \end{split}
 \end{equation}
 where $\fun_w(X)$ is $\fun(X)$ with $ w_{v,i}(t)\left[\alpha |x_{v,i}|+\overline{\mathds{1}(x_{v,i})} \right]$ instead of $g\left(\alpha | x_{v,i}| +\overline{\mathds{1}(x_{v,i}})\right)$.

Assuming that each $v$ can store $n$ bits for each one of its neighbors, the neighbors are just required to broadcast the message $i$ when the status (0 or 1) of the component $i$ has changed in the current estimation, which fulfills Assumption \ref{comm_constraints}.

Concerning the update of $x_v$ in \eqref{algo:my_reweighting}, three tricky points arise and will be discussed in next section. The minimization of $\fun_w(X)$ over $x_v$:
\begin{enumerate}
\item  is not a classical Lasso minimization due to the presence of the terms $\overline{\mathds{1}(x_v)}$;
\item requires the local communication of the $w_v$'s, which is still in contrast with Assumption \ref{comm_constraints};
\item is too fast for our networked problem: we observed in fact that the whole procedure converges after few iterations. This is undesirable because it does not allow propagation of the information over the network. We will then make the procedure slower by not computing the minimum, but just decreasing $\fun$ with respect to $x_v$, via an iterative  thresholding step.
\end{enumerate}

Before proceeding, we specify that in this work we will focus on the following concave penalization function $g$: 
\begin{equation}\label{my_g}
g(|z|)=\left\{\begin{array}{lr}
\beta |z|-\frac{1}{2}z^2&\text{ if } 0\leq z<\beta\\
\frac{1}{2}\beta^2 &\text{ otherwise.}\\
\end{array}\right.~~z\in\R,\beta>0
\end{equation}
This $g$ belongs to the family of MCP penalization functions \cite{zha10MCP}, and has been recently exploited in applications such as wavelets \cite[Equation 2.8]{ant11} and Gaussian Bayesian networks \cite{ara15}. As explained in \cite{zha10MCP}, MCP is appreciated as it minimizes the maximum concavity. In Figure \ref{fig:penalty} we compare  $g$ in \eqref{my_g} to other classical choices. Notice that when $|z|\geq \beta$, $g$ is constant and more penalization is applied, hence $\beta$ is a penalization threshold that can be tuned based on the problem. With \eqref{my_g},  in \eqref{algo:my_reweighting} we have: 
\begin{equation}\label{my_final_w}
w_{v,i}(t)=[\beta -\alpha | x_{v,i}(t)| -\overline{\mathds{1}(x_{v,i}(t)}]_+ 
\end{equation}
where $[z]_+=\max\{0,z\}$, $z\in\R$.

The motivation to focus on \eqref{my_g} is twofold: on one hand, experimental results are satisfactory (see Section \ref{sec:numerical_results}); on the other hand, the mathematical simplicity of \eqref{my_g}  allows us to provide a complete convergence analysis of $x_v(t)$ (see Section \ref{sec:convergence}). In the next section, we discuss the update of $x_v(t)$ using this $g$, and we finally state our algorithm. 

\section{Proposed algorithm}\label{sec:algorithm}
Let us tackle points 1), 2), and 3) underlined in the previous section, that complicate algorithm \eqref{algo:my_reweighting}. First of all, let us notice that we can separate the terms of $\fun_w(X)$ that depend on single $x_v$'s, and we indicate them by $\fun_w(x_v)$:
\begin{equation}
\begin{split}
\fun_w(x_v)=&\frac{1}{2}\left\|y-A_v x_v \right\|_2^2+\lambda \sum_{i=1}^n w_{v,i}\alpha |x_{v,i}|\\&+\lambda\sum_{i=1}^n  \mathds{1}(x_{v,i}) \sum_{u\in\NE_v} \frac{w_{v,i}}{|\NE_u|}. 
\end{split}
\end{equation}

This formula highlights that each $v\in\V$ has to solve a Lasso with an extra term, \emph{i.e.}, a weighted $\ell_0$ norm, as anticipated in point 1), Section \ref{sec:optimization}.
In other terms, $\fun_w(x_v)$ has both $\ell_1$ and $\ell_0$ penalizations. Moreover, point 2) is now evident: the transmission of the neighboring $w_u$'s is necessary to compute $\mathds{1}(x_{v,i}) \sum_{u\in\NE_v} \frac{w_{v,i}}{|\NE_u|}$. In the next, we will use the  notation $\mw=\mathds{1}(x_{v,i}) \sum_{u\in\NE_v} \frac{w_{v,i}}{|\NE_u|}.$

In order to face point 3), we replace the minimization step with a decreasing step, that slows down the algorithm's convergence. Given the shape of $\fun_w(x_v)$, iterative thresholding is a suitable choice for this purpose. In \cite[Section 4.1]{for10}, the \emph{soft} thresholding algorithm has been proved to decrease the Lasso functional \cite[Lemma 4.3]{for10} by showing that it iteratively minimizes a properly augmented functional, known as surrogate functional. A similar property has been proved also for the \emph{hard} thresholding algorithm in \cite{blu08}, which decreases the $\ell_0$ penalized functional. Here, we use the same scheme based on the surrogate functional to develop an iterative thresholding algorithm that decreases  $\fun$. Due to the presence of both $\ell_1$ and $\ell_0$ terms, such procedure will merge soft and hard features. We refer the interested reader to \cite{fou11, kyr11} and to \cite{kyr12} for a deeper insight into hard and soft/hard thresholding techniques, respectively.
 
 We remark that efficient methods like the alternating direction method of multipliers (ADMM), \cite{boy10,yan11} cannot be directly implemented  due to the non-convexity of $\fun$. This will be further elaborated in Sections \ref{sub:why_not_admm} and \ref{sub:ADMM}. On the other hand, in the literature algorithms for the minimization of non-convex, non-smooth problems have been recently presented \cite{bol14, bag13, che12, bur05}, which here cannot be applied due to the non-continuity of $\fun$.

Let $B=(b_1,\dots, b_{\cardV})\in\R^{n\times \cardV}$. We define the surrogate functional as follows (see  \cite[Section 4.1.1]{for10} and \cite[Section 2.2]{blu08}):
\begin{equation*}
 \begin{split}
\sun(X,B)\hspace{-0.1cm}:=&\fun(X)\hspace{-0.1cm}+\hspace{-0.1cm}\frac{1}{2}\sum_{v\in\V}\left[\frac{1}{\tau} \left\|x_v-b_v \right\|_2^2-\left\|A_v (x_v-b_v) \right\|_2^2\right].
\end{split}
\end{equation*}
By defining $z_{v}:=b_{v}+\tau A_v^{\mathsf{T}}(y_v-A_v b_v)$, the following equality can be readily proved (\cite[Section 4.1.1]{for10}):
\begin{align*}
&\left\|y_v-A_v x_v \right\|_2^2+ \frac{1}{\tau}\left\|x_v-b_v \right\|_2^2-\left\|A_v(x_v-b_v) \right\|_2^2=\\
&~~= \frac{1}{\tau}\left\|x_v-z_v\right\|_2^2+\mathsf{const}
\end{align*}
where $\mathsf{const}$ is a term not depending on $x_v$. Hence, we can write the surrogate of each $\fun_{w}(x_v)$ as:
\begin{equation}\label{fun_dep_xvi}
\sun_w(x_{v,i})=\frac{1}{2\tau} (x_{v,i}-z_{v,i})^2+ \lambda\left[ \alpha w_{v,i} | x_{v,i}| +\mathds{1}(x_{v,i})\mw\right].
\end{equation}
Following the procedure in \cite[Section 4.1.1]{for10}, we minimize $\sun_w(x_{v,i})$ in \eqref{fun_dep_xvi} with respect to $x_{v,i}$. We distinguish two cases.

\begin{enumerate}
 \item $|z_{v,i}|\leq w_{v,i}$:  $\argmin{} \sun(x_{v,i})=0$.
 
 In fact, if $|z_{v,i}|\leq w_{v,i}$ and $x_{v,i}\neq 0$, the derivative of $\sun_w(x_{v,i})$ is $x_{v,i}-z_{v,i}+\text{sgn}(x_{v,i})w$, which is positive for $x_{v,i}>0$, and symmetrically negative for $x_{v,i}<0$. We then have the infimum points 
 $\lim_{x_{v,i}\to 0+}\sun_{x_{v,i}}=\frac{1}{2\tau}z_{v,i}^2+\lambda\mw\geq \frac{1}{2\tau}z_{v,i}^2= \sun_w(0)$, which shows that the global minimum is in zero, as depicted in Figure \ref{fig:1}.(a). 
 
 \item $|z_{v,i}|> w_{v,i}$: if $(|z_{v,i}|-w_{v,i})^2 < 2\tau\lambda\mw $, $\argmin{} \sun(x_{v,i})=z_{v,i}-w_{v,i}\text{sgn}(x_{v,i})$; otherwise, $\argmin{} \sun(x_{v,i})=0$.

 In fact, if $|z_{v,i}|> w_{v,i}$ and $x_{v,i}\neq 0$, the derivative of $\sun_w(x_{v,i})$ is zero (and we have a minimum) for $x_{v,i}=z_{v,i}-w_{v,i}\text{sgn}(x_{v,i})$, that is, $x_{v,i}=z_{v,i}-w_{v,i}$ if $z_{v,i}> w_{v,i}$, and $x_{v,i}=z_{v,i}+w_{v,i}$ if $z_{v,i}<- w_{v,i}$. This is not sufficient: this minimum has to be compared with $\sun_w(0)$, which, due to discontinuity, should be lower (see Figure \ref{fig:1}.(b)-(c)) This  occurs for $(|z_{v,i}|-w_{v,i})^2 < \tau\lambda\mw $, since
 $\sun_w(z_{v,i}-w_{v,i}\text{sgn}(x_{v,i}))=\frac{1}{2\tau}w_{v,i}(2|z_{v,i}|-w_{v,i})$ and $\sun_w(0)=\frac{1}{2\tau}z_{v,i}^2$.
 
 \end{enumerate}

 We observe that, despite the discontinuity in zero, the case $|z_{v,i}|\leq w_{v,i}$ is  analogous to soft thresholding. That is, the presence of the $\mathds{1}(x_{v})$ term does not change the position of the minimum (Figure \ref{fig:1}.(a)). However, when $|z_{v,i}|> w_{v,i}$  the term $\mathds{1}(x_{v})$ induces to choose zero more often than soft thresholding.

\begin{figure*}
\centering
\subfloat[$|z_{v,i}|<w_{v,i}$]{
\begin{tikzpicture}[scale=0.8]
\begin{axis}[xmin=-4,xmax=4,ymin=-14, ymax=13, xtick={0}, ytick={-12}, xticklabels={0},yticklabels={,,}] 
    \addplot[name path = funone, draw=red, mark=none, semithick, domain=-4:-0.15,
         samples=200, -(,shorten >=-2.5pt] 
    {x^2+abs(x)-5};
     \addplot[name path = funtwo, draw=red, mark=none, semithick, domain=0.15:4,
         samples=200,)-,shorten <=-2.5pt] 
    {x^2+abs(x)-5}; 
 \draw[name path = sx, dotted, black] (axis cs:0,-15) -- (axis cs:0,15);
 \draw[name path = sy, dashed, black] (axis cs:-10,-12) -- (axis cs:10,-12);
  \fill[red,name intersections={of=sx and sy}] (intersection-1) circle (2.7pt);
  \node [above] at (axis cs:-1.5,-12) {$z_{v,i}^2$};
  \node [above,red] at (axis cs:-2.5,9.5) {$\sun (x_{v,i})$};
 \end{axis} 
\end{tikzpicture}}
\subfloat[$|z_{v,i}|>w_{v,i}$, $(z_{v,i}-w_{v,i})^2>\mw $]{
\begin{tikzpicture}[scale=0.8]
\begin{axis}[xmin=-3,xmax=5,ymin=-14, ymax=13, xtick={0, 1.5}, ytick={-5, -10}, xticklabels={0, $z_{v,i}-w_{v,i}$},yticklabels={,,}]    
    \addplot[name path = funone, draw=red, mark=none, semithick, domain=-4:-0.035,
         samples=200,-(,shorten >=-2pt] 
    {(2*x-4)^2+4*abs(x)-17};
     \addplot[name path = funtwo, draw=red, mark=none, semithick, domain=0.035:4,
         samples=200,)-,shorten <=-2pt] 
    {(2*x-4)^2+4*abs(x)-17}; 
 \draw[name path = sx, dotted, black] (axis cs:0,-15) -- (axis cs:0,15);
 \draw[name path = sy, dashed, brown] (axis cs:-10,-5) -- (axis cs:10,-5);
 \draw[name path = sminx, dotted, black] (axis cs:1.5,-15) -- (axis cs:1.5,15);
 \draw[name path = sminy, dashed, blue] (axis cs:-10,-10) -- (axis cs:10,-10);
 \fill[red,name intersections={of=sx and sy}] (intersection-1) circle (2.7pt);
 \fill[red,name intersections={of=sminx and sminy}] (intersection-1) circle (2.7pt); 
  \node [below, blue] at (axis cs:-0.9,-10) {$ 2z_{v,i}w_{v,i}-w_{v,i}^2+\mw $};
  \node [below,brown] at (axis cs:-2,-5) {$z_{v,i}^2$};
\end{axis} 
\end{tikzpicture}}
\subfloat[$|z_{v,i}|>w_{v,i}$, $(z_{v,i}-w_{v,i})^2<\mw$]{
\begin{tikzpicture}[scale=0.8]
\begin{axis}[xmin=-3,xmax=5,ymin=-14, ymax=13, xtick={0, 2}, ytick={-6, -10}, xticklabels={0, $z_{v,i}-w_{v,i}$},yticklabels={,,}]    
    \addplot[name path = funone, draw=red, mark=none, semithick, domain=-4:-0.1,
         samples=200,-(,shorten >=-3pt] 
    {(x-4)^2+4*abs(x)-18};
     \addplot[name path = funtwo, draw=red, mark=none, semithick, domain=0.1:5,
         samples=200,)-,shorten <=-3pt] 
    {(x-4)^2+4*abs(x)-18}; 
 \draw[name path = sx, dotted, black] (axis cs:0,-15) -- (axis cs:0,15);
 \draw[name path = sminy, dashed, blue] (axis cs:-10,-6) -- (axis cs:10,-6);
 \draw[name path = sminx, dotted, black] (axis cs:2,-15) -- (axis cs:2,15);
 \draw[name path = sy, dashed, brown] (axis cs:-10,-10) -- (axis cs:10,-10);
  \fill[red,name intersections={of=sx and sy}] (intersection-1) circle (2.7pt);
 \fill[red,name intersections={of=sminx and sminy}] (intersection-1) circle (2.7pt);
 \node [above, red] at (axis cs:-2,9.5) {$\sun(x_{v,i})$};
 \node [below, blue] at (axis cs:-0.9,-6) {$2z_{v,i}w_{v,i}-w_{v,i}^2+\mw$};
 \node [below, brown] at (axis cs:-2,-10) {$z_{v,i}^2$};
 \end{axis} 
\end{tikzpicture}}
\caption{$\sun(x_{v,i})$  \eqref{fun_dep_xvi} in the cases $|z_{v,i}|<w_{v,i}$ (a) and $|z_{v,i}|>w_{v,i}$ (b)-(c).}\label{fig:1}
\end{figure*}

Hence, our procedure to get the minimum of $\sun(x_{v,i})$ is given by the mixed soft/hard thresholding. operator $\soft_{w,a}:\R\mapsto\R$, defined as follows:
\begin{equation}\label{soft_mod}
\soft_{w,a}(x):=\begin{cases}
&0\text{ if } |x|\leq w \text{ or } (x-w)^2\leq a\\
&x-\text{sgn}(x)w\text{ otherwise. } \\
\end{cases}
\end{equation}
This is a slight modification of the well-known soft thresholding operator  $\soft_{w}:\R\mapsto\R$
\begin{equation}\label{soft}
\soft_{w}(x):=\begin{cases}
&0\text{ if } |x|\leq w \\
&x-\text{sgn}(x)w\text{ otherwise. } \\
\end{cases}
\end{equation}
Accordingly, we can write 
 $$x^{+}_{v,i}=\argmin{x_{v,i}\in\R} \sun(x_{v,i})=\soft_{w_{v,i},\mw }(z_{v,i})$$
which, if $\frac{1}{\tau}>\left\|A_v\right\|_2^2$, implies that (\cite[Section 4.1]{for10} for details)    
\begin{align}\label{min_wrt_b}
X=\argmin{B\in\R^{n\times \cardV}}\sun(X,B).
\end{align}
Finally, we conclude that  $\fun$ decreases:
\begin{align}\label{fun_decreases}
\fun(X)=&\sun(X,X)\geq \sun(X^+,X)\\& \geq \sun(X^+,X^+)=\fun(X^+) 	
\end{align}
where $X^+=(x^+_1,\dots,x^+_V)$. The inequality $\sun(X,X)\geq \sun(X^+,X)$ is guaranteed by LLA \cite{can08rew, faz03}. This will be used in next section to prove the convergence.

The procedure outlined above can be summarized as follows: at each iteration step $t$, each node $v$ computes  $x_{v,i}(t+1)=\soft_{w_{v,i}(t),\mwt}(z_{v,i}(t))$, for each $i=1,\dots,n$, where 
$z_v(t)=x_v(t)+\tau A_v^{\mathsf{T}}\big(y_v-A_vx_v(t)\big)$; after that, if $\mathds{1}(x_{v,i}(t+1))\neq \mathds{1}(x_{v,i}(t))$, then $v$ transmits $i$ to its neighbors.

In conclusion, this procedure  solves points 1), 2) and 3) in Section \ref{sec:optimization} by using iterative thresholding.  However, we observed that the soft/hard shrinkage operator $\softmod_{w,a}$ \eqref{soft_mod} tends to oversupply sparsity, which affects the recovery accuracy. To overcome this drawback, we propose to use \eqref{soft} instead of \eqref{soft_mod}, that is, classical soft thresholding. As this may increase $\fun_w(X)$ (specifically, $\fun_w(X(t+1))>\fun_w(X(t))$ when $x_{v,i}(t)=0$,  see Figure \ref{fig:2}.(c)), we allow the switch from zero
to non-zero only for a finite number of times, thus keeping the overall decreasing behavior. In other words, for a finite transient, we  perform soft thresholding; after this transient, the zero components are forced to remain zero. In summary, we update $x_{v,i}(t)$ as follows (see Figure \ref{fig:2}):
\begin{itemize}
 \item if $x_{v,i}(t)\neq 0$, we apply soft thresholding: $x_{v,i}(t+1)=\sigma_{w_{v,i}}(z_{v,t})(t)$. This does not guarantee to get the global minimum of $\sun(x_{v,i})$, but the global minimum or the second minimum: in both cases, we \emph{always} decrease $\sun$;
 \item if $x_{v,i}(t) = 0$, $x_{v,i}(t+1)=\sigma_{w_{v,i}}(z_{v,t})(t)$ for a finite number of times (during which $\sun$ might increase); afterwards, $x_{v,i}(t+1) = 0$.
\end{itemize}

We  remark again that this transient suboptimal modification i) avoids the  transmission of real values, ii) improves the performance (see Section \ref{sec:numerical_results}), and iii) does not affect the convergence properties of the algorithm (see Section \ref{sec:asymp_reg}).

\begin{figure*}
\center\hspace{-2cm}
	\subfloat[$|z_{v,i}(t)|<w_{v,i}(t)$]{
		\begin{tikzpicture}[scale=0.8]
		\begin{axis}[xmin=-4,xmax=4,ymin=-14, ymax=13, xtick={0,2}, ytick={-12},xticklabels={ {\small{$x_{v,i}(t\hspace{-0.1cm}+\hspace{-0.1cm}1\hspace{-0.03cm})$}}, {\small{$x_{v,i}(t)$}}},yticklabels={,,}] 
		\addplot[name path = funone, draw=red, mark=none, semithick, domain=-4:-0.15,
		samples=200, -(,shorten >=-2.5pt] 
		{x^2+abs(x)-5};
		\addplot[name path = funtwo, draw=red, mark=none, semithick, domain=0.15:4,
		samples=200,)-,shorten <=-2.5pt] 
		{x^2+abs(x)-5}; 
		\draw[name path = sx, dotted, black] (axis cs:0,-15) -- (axis cs:0,15);
		\draw[name path = sy, dashed, black] (axis cs:-10,-12) -- (axis cs:10,-12);
		\fill[red,name intersections={of=sx and sy}] (intersection-1) circle (2.7pt);
	     \draw[blue, thick, -triangle 45,fill=blue] (axis cs:2,-13.5) -- (axis cs:0,-13.5);
         \draw[blue,  thick, dashed, -triangle 45,fill=blue,shorten >=2.5pt] (axis cs:2,1) -- (axis cs:0,-12);
         \draw[black, dotted] (axis cs:2,-13.5) -- (axis cs:2,1);
		\node [above,red] at (axis cs:-2.5,9.5) {$\sun (x_{v,i})$};
		\end{axis} 
		\end{tikzpicture}}\hspace{1cm}
	\subfloat[$|z_{v,i}(t)|>w_{v,i}(t)$, $(z_{v,i}(t)-w_{v,i}(t))^2>\mwt $]{
	 \makebox[3.5cm]{
		\begin{tikzpicture}[scale=0.8]
		\begin{axis}[xmin=-3,xmax=5,ymin=-14, ymax=13, xtick={0, 1.5,3}, ytick={-5, -10}, xticklabels={{\footnotesize{$\hspace{-1cm}x_{v,i}(t)\hspace{-0.1cm}=\hspace{-0.1cm}0$}},{\footnotesize{$x_{v,i}(t\hspace{-0.1cm}+\hspace{-0.1cm}1\hspace{-0.03cm})$}}, {\footnotesize{$\hspace{0.8cm}x_{v,i}(t)\hspace{-0.1cm}\neq\hspace{-0.1cm} 0$}}},yticklabels={,,}]    
		\addplot[name path = funone, draw=red, mark=none, semithick, domain=-4:-0.035,
		samples=200,-(,shorten >=-2pt] 
		{(2*x-4)^2+4*abs(x)-17};
		\addplot[name path = funtwo, draw=red, mark=none, semithick, domain=0.035:4,
		samples=200,)-,shorten <=-2pt] 
		{(2*x-4)^2+4*abs(x)-17}; 
		\draw[name path = sx, dotted, black] (axis cs:0,-15) -- (axis cs:0,15);
		\draw[name path = sy, dashed, black] (axis cs:-10,-5) -- (axis cs:10,-5);
		\draw[name path = sminx, dotted, black] (axis cs:1.5,-15) -- (axis cs:1.5,15);
		\draw[name path = sminy, dashed, black] (axis cs:-10,-10) -- (axis cs:10,-10);
		\fill[red,name intersections={of=sx and sy}] (intersection-1) circle (2.7pt);
		\fill[red,name intersections={of=sminx and sminy}] (intersection-1) circle (2.7pt); 
		\node [above, red] at (axis cs:-1.8,9.5) {$\sun(x_{v,i})$};
		\draw[blue, thick, -triangle 45,fill=blue] (axis cs:3,-13.5) -- (axis cs:1.5,-13.5);
        \draw[blue,  thick, dashed, -triangle 45,fill=blue,shorten >=2.5pt] (axis cs:3,-1) -- (axis cs:1.5,-10);
         \draw[black, dotted] (axis cs:3,-13.5) -- (axis cs:3,-1);
         \draw[blue, thick, -triangle 45,fill=blue] (axis cs:0,-13.5) -- (axis cs:1.5,-13.5);
        \draw[blue,  thick, dashed, -triangle 45,fill=blue,shorten >=2.5pt] (axis cs:0,-5) -- (axis cs:1.5,-10);
         \draw[black, dotted] (axis cs:3,-13.5) -- (axis cs:3,-1);
		\end{axis} 
		\end{tikzpicture}}}\hspace{3cm}
	\subfloat[$|z_{v,i}(t)|>w_{v,i}(t)$, $(z_{v,i}(t)-w_{v,i}(t))^2<\mwt $]{\makebox[3.6cm]{
		\begin{tikzpicture}[scale=0.8]
		\begin{axis}[xmin=-3,xmax=5,ymin=-14, ymax=13, xtick={0, 2,4}, ytick={-6, -10}, xticklabels={{\footnotesize{$\hspace{-1cm}x_{v,i}(t)\hspace{-0.1cm}=\hspace{-0.1cm}0$}},{\footnotesize{$x_{v,i}(t\hspace{-0.1cm}+\hspace{-0.1cm}1\hspace{-0.03cm})$}}, {\footnotesize{$\hspace{0.1cm}x_{v,i}(t)\hspace{-0.1cm}\neq\hspace{-0.1cm} 0$}}},yticklabels={,,}]    
		\addplot[name path = funone, draw=red, mark=none, semithick, domain=-4:-0.1,
		samples=200,-(,shorten >=-3pt] 
		{(x-4)^2+4*abs(x)-18};
		\addplot[name path = funtwo, draw=red, mark=none, semithick, domain=0.1:5,
		samples=200,)-,shorten <=-3pt] 
		{(x-4)^2+4*abs(x)-18}; 
		\draw[name path = sx, dotted, black] (axis cs:0,-15) -- (axis cs:0,15);
		\draw[name path = sminy, dashed, black] (axis cs:-10,-6) -- (axis cs:10,-6);
		\draw[name path = sminx, dotted, black] (axis cs:2,-15) -- (axis cs:2,15);
		\draw[name path = sy, dashed, black] (axis cs:-10,-10) -- (axis cs:10,-10);
		\fill[red,name intersections={of=sx and sy}] (intersection-1) circle (2.7pt);
		\fill[red,name intersections={of=sminx and sminy}] (intersection-1) circle (2.7pt);
		\node [above, red] at (axis cs:-2,9.5) {$\sun(x_{v,i})$};
		;
		  \draw[orange, thick, dashed, -triangle 45,fill=orange,shorten >=2.5pt] (axis cs:.0,-10) -- (axis cs:2,-6);
        \draw[blue,  thick, -triangle 45,fill=blue] (axis cs:0,-13.5) -- (axis cs:2,-13.5);
      \draw[blue, thick, dashed, -triangle 45,fill=blue,shorten >=2.5pt] (axis cs:4,-2) -- (axis cs:2,-6);
        \draw[blue,  thick, -triangle 45,fill=blue] (axis cs:4,-13.5) -- (axis cs:2,-13.5);
         \draw[black, dotted] (axis cs:4,-2) -- (axis cs:4,-13.5);
		\end{axis} 
		\end{tikzpicture}}}
	\caption{ Dynamics of $\sun(x_{v,i}(t))$ when $x_{v,i}(t+1)=\soft_{w_{v,i}}(x_{v,i}(t))$. The arrows depict the movements of $x_{v,i}(t)$ and $\sun(x_{v,i}(t))$. In the case (c), if $x_{v,i}(t)=0$, $\sun(x_{v,i}(t))<\sun(x_{v,i}(t+1))$ (orange arrow). This increasing movement is allowed only for a finite number of times, after which if $x_{v,i}(t)=0$, we fix   $x_{v,i}(t+1)=0$. In this way, the definitive behavior of  $\sun(x_{v,i}(t))$ is non-increasing.}
	\label{fig:2}
\end{figure*}
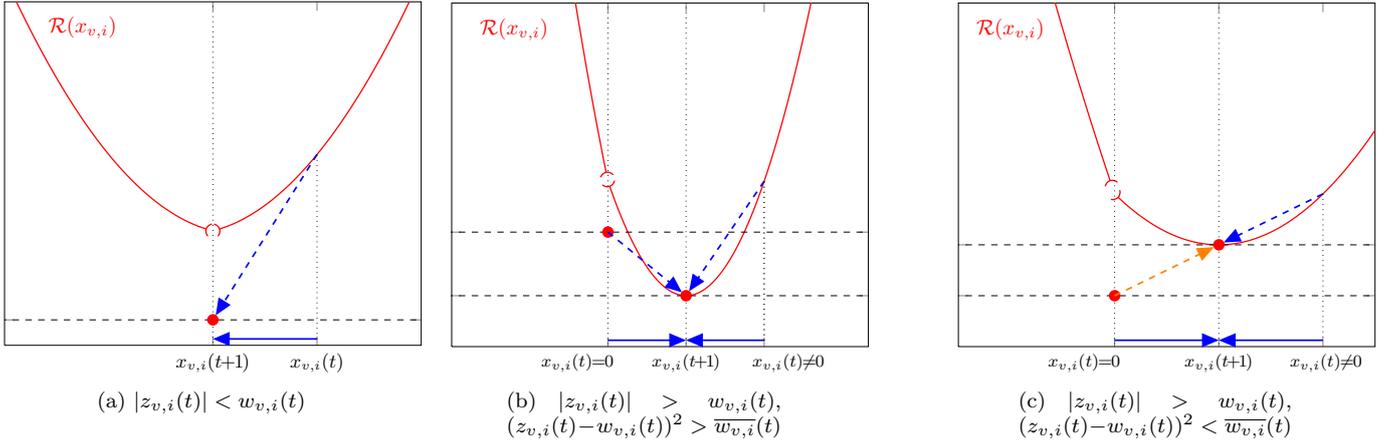
Bearing all the above in mind, our distributed procedure for the recovery of jointly sparse signals based on IST, DJ-IST in short, is described in Algorithm \ref{DJ-IST}. 

\begin{algorithm}
\caption{DJ-IST}\label{DJ-IST}
\begin{algorithmic}[1] 
\STATE   Initialize   variables:  \\For all $v\in\V$,  $x_v(0)=A_v^\mathsf{T}y_v$;
$s_v(0)=[1,1,\dots,1]^{\mathsf{T}}$; $p\in\N$ (finite); $\epsilon>0$, $\tau>0$, $\lambda>0$, $\alpha>0$
\STATE $t=0$
\FORALL{$v\in\V$}
\STATE  $z_v(t)=x_v(t)+\tau A_v^{\mathsf{T}}\big(y_v-A_vx_v(t)\big)$
\FORALL{$i=1,\dots,n$}
%
%
%
\STATE Update threshold  $w_{v,i}(t)=[\beta -\alpha | x_{v,i}(t)| -\overline{\mathds{1}(x_{v,i}(t)}]_+ $
%
%
\STATE Update signal estimate:\\ $x_{v,i}(t+1)=\soft_{\lambda\alpha w_{v,i}(t)}(z_{v,i}(t))$

\IF{ $x_{v,i}(t)=0$ and $c_{v,i}(t)\geq p$}
\STATE      $x_{v,i}(t+1)=0$
\ENDIF

\IF{ $x_{v,i}(t)=0$ and $x_{v,i}(t)\neq0$ }
\STATE $c_{v,i}(t+1)=c_{v,i}(t)+1$
\ENDIF
\IF{ $\mathds{1}(x_{v,i}(t+1))\neq \mathds{1}(x_{v,i}(t))$}
\STATE Transmit index $i$ to the neighbors 
\ENDIF
\ENDFOR
\IF{$\left\|x_v(t+1)-x_v(t)\right\|_2<\epsilon$}
\STATE Node $v$ stops
\ELSE
\STATE $t\longleftarrow t+1 $
\ENDIF 

\ENDFOR 

\end{algorithmic}
\end{algorithm} 

It is worth noting that DJ-IST merely requires to transmit information about the support, specifically,  the indices of the components that switched from zero to non-zero and vice versa.  Since the sensor signals $x_v$'s are in $\R^n$, DJ-IST transmits $\lfloor \log_2 n \rfloor +1$ bits for each switched component. 


\subsection{Other iterative algorithms for Lasso}\label{sub:why_not_admm}
At the beginning of the section, the use of iterative thresholding was naturally motivated by its adaptability to decrease the non-convex functional $\fun$ in  \eqref{ultimate_problem}, which presents  $\ell_1$ and $\ell_0$ penalization terms.

In the literature,  methods faster than iterative thresholding have been proposed to solve convex problems as Lasso. For example, the alternating direction method of multipliers (ADMM, \cite{boy10,yan11}), and the fast iterative thresholding algorithm (FISTA, \cite{bec09}) have been shown to be very efficient. In principle, such methods   cannot be applied to \eqref{ultimate_problem} due to the non-convexity of $\fun$. Through this section, however, we have reduced the step that updates $X(t)$ to an IST step (with forced stabilization of the null components after a finite transient), which means that we simply decrease the Lasso part of $\fun$, and the role of $\ell_0$ is only to stop the switches from zero to non-zero.

From this perspective, we could consider again methods as ADMM and FISTA to update $X(t)$. However, we observed that such methods are somehow too fast for our problem. In fact, if the procedure is too fast, nodes tend to estimate their signals support based on their local measurements, \emph{i.e.}, without taking into account other nodes information. This  causes some transient instability in which many support switches occur, which implies many more transmissions, thereby penalizing the communication cost. In conclusion more conservative methods ultimately reduce the number of transmissions, which makes the slow IST more efficient. In order to illustrate these observations, in Section \ref{sub:ADMM} we will show some numerical simulations based on ADMM.

\section{Convergence of DJ-IST}
In this section, we prove that DJ-IST converges. We first show the the numerical convergence and the support stabilization, and we then exploit them to prove the point convergence.

\subsection{Numerical convergence }\label{sec:asymp_reg}

We now prove the numerical convergence (or asymptotic regularity) of the sequence $X(t)$ produced step by step by Algorithm \ref{DJ-IST}, namely
\begin{align*}
 \lim_{t\rightarrow+\infty}&\left\|X(t+1)-X(t)\right\|_F^2=0
\end{align*}

We remark that for the convergence analysis we do not take into consideration the fact that for a finite number of steps, increases of $\sun$ are allowed, as they clearly have no effect on the asymptotic properties of the algorithm. From now on, we then consider $t\geq t_0$, where $t_0$ is any fixed time step after the finite transient.

\begin{proposition}\label{decrescenza} 
Given the sequence $X(t)$ generated by DJ-IST (Algorithm \ref{DJ-IST}), $\{\fun(X(t))\}_{t\in\N}$ for $t\geq t_0$ is non-increasing, and admits the limit. Moreover, if  $\tau  <\left\|A_v\right\|_2^{-2}$ for  all $v\in\mathcal{V}$, $X(t)$ is numerically convergent.
\end{proposition}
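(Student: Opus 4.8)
The plan is to run the classical surrogate-functional argument for iterative soft thresholding (\cite[Section 4.1]{for10}, \cite{blu08}), adapted to the mixed $\ell_1/\ell_0$ penalty $\fun$ and to the fact that, for $t\ge t_0$, Algorithm~\ref{DJ-IST} uses plain soft thresholding \eqref{soft} with the already-null components frozen. First I would establish monotonicity via the sandwich \eqref{fun_decreases}: $\fun(X(t))=\sun(X(t),X(t))\ge\sun(X(t+1),X(t))\ge\sun(X(t+1),X(t+1))=\fun(X(t+1))$; since $g\ge 0$ forces $\fun\ge 0$, the non-increasing sequence $\{\fun(X(t))\}_{t\ge t_0}$ is bounded below and hence admits a finite limit, which is the first claim. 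For numerical convergence I would then \emph{not} discard the slack in the second inequality of \eqref{fun_decreases}: under $\tau<\|A_v\|_2^{-2}$ that slack is a quadratic form bounded below by $C\|X(t+1)-X(t)\|_F^2$ with $C>0$, and telescoping over $t$ turns the bounded total decrease of $\fun$ into a summable series, whence $\|X(t+1)-X(t)\|_F^2\to 0$.

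In detail, the two inequalities of \eqref{fun_decreases} are handled separately. For $\sun(X(t),X(t))\ge\sun(X(t+1),X(t))$ I would argue componentwise: with $B=X(t)$ fixed, $\sun(\cdot,X(t))$ decomposes, up to a constant, into $\sum_{v,i}\sun_w(x_{v,i})$ with $\sun_w$ as in \eqref{fun_dep_xvi} and $z_{v,i}=z_{v,i}(t)$. If $x_{v,i}(t)=0$ then, being past the transient, the update pins $x_{v,i}(t+1)=0$ and this term is unchanged; if $x_{v,i}(t)\neq 0$, the soft-thresholding update of Algorithm~\ref{DJ-IST} minimizes the restriction of $\sun_w(\cdot)$ to the branch $\{x_{v,i}\neq 0\}$, on which the term $\mathds{1}(x_{v,i})\mw$ equals the constant $\lambda\mw$, and since $x_{v,i}(t)$ lies in that same branch the update does not increase $\sun_w$. (This is precisely the remark after \eqref{soft_mod} that soft thresholding lands on the global or the second minimum but \emph{always} decreases $\sun$; the exact minimizer $\softmod_{w,a}$ is never invoked.) For $\sun(X(t+1),X(t))\ge\sun(X(t+1),X(t+1))=\fun(X(t+1))$ I would invoke \eqref{min_wrt_b}, valid because $\tfrac{1}{\tau}>\|A_v\|_2^2$ for all $v$, at the point $X=X(t+1)$, $B=X(t)$. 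Chaining the two yields $\fun(X(t))\ge\fun(X(t+1))$, and $\fun\ge 0$ closes the first part.

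For numerical convergence, set $\Delta_v:=x_v(t+1)-x_v(t)$. From the definition of $\sun$,
\begin{align*}
\fun(X(t))-\fun(X(t+1))\ &\ge\ \sun(X(t+1),X(t))-\fun(X(t+1))\\
&=\ \tfrac{1}{2}\sum_{v\in\V}\Big(\tfrac{1}{\tau}\|\Delta_v\|_2^2-\|A_v\Delta_v\|_2^2\Big)\\
&\ge\ C\,\|X(t+1)-X(t)\|_F^2,
\end{align*}
with $C:=\tfrac{1}{2}\big(\tfrac{1}{\tau}-\max_{v\in\V}\|A_v\|_2^2\big)>0$, where strict positivity is exactly the hypothesis $\tau<\|A_v\|_2^{-2}$ for every $v$ and we used $\|A_v\Delta_v\|_2^2\le\|A_v\|_2^2\|\Delta_v\|_2^2$. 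Summing from $t_0$ to any $T$ telescopes the left-hand side to $\fun(X(t_0))-\fun(X(T+1))\le\fun(X(t_0))<\infty$; letting $T\to\infty$ gives $\sum_{t\ge t_0}\|X(t+1)-X(t)\|_F^2<\infty$, so the general term vanishes, i.e.\ $X(t)$ is numerically convergent.

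The step I expect to be the real obstacle is the first inequality $\sun(X(t),X(t))\ge\sun(X(t+1),X(t))$: the algorithm deliberately replaces the surrogate-minimizing operator $\softmod_{w,a}$ by plain soft thresholding (for improved support recovery), so one must argue carefully that this modified update still never increases $\sun_w$ once $t\ge t_0$ — the key point being that on each branch $x_{v,i}\neq 0$ the $\ell_0$ contribution is constant, so soft thresholding does minimize $\sun_w$ there, while the components that are already zero are frozen by construction. The remaining ingredients — boundedness below of $\fun$ from $g\ge 0$, the telescoping, and the fact that the discontinuity of $\fun$ at the origin is harmless since we only ever compare values of $\fun$ — are routine.
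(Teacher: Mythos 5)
Your proposal is correct and follows essentially the same route as the paper: monotonicity of $\fun(X(t))$ via the surrogate sandwich \eqref{fun_decreases}, lower-boundedness by $0$, and then the bound $\fun(X(t))-\fun(X(t+1))\ge \sun(X(t+1),X(t))-\sun(X(t+1),X(t+1))$, which under $\tau<\|A_v\|_2^{-2}$ is a positive definite quadratic form in $X(t+1)-X(t)$. The only cosmetic difference is that you telescope to get summability of $\|X(t+1)-X(t)\|_F^2$, whereas the paper deduces the same vanishing directly from $\fun(X(t))-\fun(X(t+1))\to 0$; your explicit justification of the first inequality for the plain soft-thresholding update with frozen zeros matches the paper's discussion around Figure \ref{fig:2}.
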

\begin{proof}
By \eqref{fun_decreases} and following discussion, for any for $t\geq t_0$,
$\fun(X(t))\geq \fun(X(t+1)) $, that is, $\fun(X(t))$ is non-increasing. As it is lower bounded ($\fun(X)\geq 0$ for any $X\in\R^{n\times\cardV}$), then it admits the limit.
Hence, $\fun(X(t))-\fun(X(t+1))\rightarrow 0$. On the other hand, 
\begin{align*}
&\fun(X(t))-\fun(X(t+1))\\& = \sun(X(t),  X(t))-\sun(X(t+1), X(t+1))\\
&\geq \sun(X(t+1), X(t))-\sun(X(t+1), X(t+1))\\
&\geq \sum_{v\in\mathcal{V}}(x_v(t+1)-x_v(t))^{\mathsf{T}}(I-\tau  A_v^{\mathsf{T}}A_v)(x_v(t+1)-x_v(t))\\
&\geq 0.
\end{align*}
The  last inequality is due to the positive definiteness of  $I-\tau  A_v^{\mathsf{T}}A_v$ guaranteed by  the hypothesis $\tau  <\left\|A_v\right\|_2^{-2}$. We thus conclude  that $\left\|x_v(t+1)-x_v(t)\right\|_2^2\to 0$  for any
$v\in\mathcal V$ and that 
$\lim_{t\rightarrow+\infty}\left\|X(t+1)-X(t)\right\|_F^2=0.$

\end{proof}

Furthermore, we can easily observe that support stabilizes at a finite time.
\begin{theorem}\label{theo:supp_stab}
There exists a time $t_1\in\N$ at which the sequence $\mathds{1}(X(t))$ stabilizes, that is,  $\mathds{1}(X(t))$ is constant for any $t\geq t_1$.
\end{theorem}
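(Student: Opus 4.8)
The plan is to exploit the numerical convergence already established in Proposition~\ref{decrescenza} together with the discrete (finite-valued) nature of the support indicator $\mathds{1}(X(t))$. The key observation is that each component $x_{v,i}(t+1)$ is obtained by soft thresholding $\soft_{\lambda\alpha w_{v,i}(t)}(z_{v,i}(t))$, possibly forced to zero after the transient; so a switch of $\mathds{1}(x_{v,i}(t))$ from $0$ to a nonzero value, or vice versa, forces $x_{v,i}$ to jump across a threshold, hence forces $\left\|x_v(t+1)-x_v(t)\right\|_2$ to be bounded below by a positive quantity that cannot shrink to zero. Combined with $\left\|X(t+1)-X(t)\right\|_F \to 0$, only finitely many such switches can occur, and since the switches for $t\geq t_0$ are all ``from nonzero to zero'' (the zero-to-nonzero switches number at most $p$ per component and happen only in the transient), the support is eventually monotone and therefore constant after some finite $t_1$.

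Concretely, I would proceed as follows. First, fix $t \ge \max\{t_0, t_2\}$ where $t_2$ is a time after which every component that has exhausted its $p$ allowed zero-to-nonzero switches has done so; for $t$ beyond this, whenever $x_{v,i}(t)=0$ we have $x_{v,i}(t+1)=0$, so the number of nonzero components is non-increasing in $t$. Since this count is a non-negative integer, it stabilizes at some finite $t_1' \ge t_2$: the \emph{set} of components that are ever zeroed-out is fixed, but I still must rule out that a component stays nonzero while oscillating --- that part is handled by the support set being determined by which components are nonzero, and the count being constant plus the ``zero is absorbing'' property forcing the nonzero set itself (not just its cardinality) to be constant. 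Hence $\mathds{1}(X(t))$ is constant for $t \ge t_1 := \max\{t_1', t_0\}$.

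Alternatively, and more robustly if one does not want to lean on the ``zero is absorbing'' bookkeeping, I would argue directly: suppose for contradiction that $\mathds{1}(X(t))$ changes infinitely often. Then there is a node $v$, an index $i$, and an infinite set of times $t_k$ at which $\mathds{1}(x_{v,i}(t_k)) \ne \mathds{1}(x_{v,i}(t_k+1))$. At each such $t_k$, either $x_{v,i}(t_k)=0$ and $x_{v,i}(t_k+1)\ne 0$ --- but after the transient this is impossible --- or $x_{v,i}(t_k)\ne 0$ and $x_{v,i}(t_k+1)=0$. In the latter case the soft-thresholding rule gives $|z_{v,i}(t_k)| \le \lambda\alpha w_{v,i}(t_k) \le \lambda\alpha\beta$, so $|x_{v,i}(t_k)| = |\soft(z_{v,i}(t_k-1))|$ at the previous step combined with asymptotic regularity forces $x_{v,i}(t_k) \to 0$ along the subsequence; but then the component was already effectively zero, and one shows the surrogate-minimization dynamics cannot re-inflate it, contradicting infinitely many ``nonzero'' visits.

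The main obstacle I anticipate is the second alternative above: ruling out an \emph{infinite} oscillation of a single component between small-but-nonzero values and exact zero. Asymptotic regularity ($\left\|x_v(t+1)-x_v(t)\right\|_2 \to 0$) alone does not immediately forbid a component from having magnitude tending to zero while remaining strictly positive; one genuinely needs the structural fact that once $x_{v,i}(t)=0$ the algorithm sets $x_{v,i}(t+1)=0$ for all later $t$ (after the $p$-bounded transient), which is exactly the ``force the null components to remain zero'' modification built into DJ-IST. So the cleanest route is really the first one: the zero-to-nonzero switches are capped at $p$ per component by construction, hence finitely many in total across the network; after the last of them, zeros are permanent, so the support can only lose elements; a non-increasing sequence of finite subsets of $\{1,\dots,n\}$ (equivalently, a non-increasing integer, the cardinality) stabilizes in finitely many steps, yielding the claimed $t_1$.
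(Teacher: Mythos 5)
Your proposal is correct and its ``cleanest route'' is exactly the paper's argument: the zero-to-nonzero switches are capped by construction, so after a finite time zeros are absorbing, the support set is monotone non-increasing, and hence stabilizes at some finite $t_1$. The additional musings about using asymptotic regularity are unnecessary (and you correctly identify why that route alone would not suffice), but they do not detract from the valid core argument.
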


\begin{proof}
After a finite number of allowed switches, no more switches from zero to non-zero are possible for DJ-IST, say $x_{v,i}(t)=0$, then $x_{v,i}(t+1)=0$ , for any $v\in\V$, $1=1,\dots,n$. This is sufficient to state that the support stabilizes. In particular, we call $t_1$ the time at which all the components of all the nodes have stabilized their status.
\end{proof}

Alternatively, this result could be easily deduced from Proposition \ref{decrescenza}. Since $X(t)$ numerically converge and the support stabilize, we notice that also $W(t)$ numerically converge.

%

\subsection{Point convergence}\label{sec:convergence}
We now leverage numerical convergence and support stabilization to prove rigorous point convergence. 

Once the support estimation has stabilized, our main goal should be considered achieved.  No more communication is necessary and the signal estimate (say, the estimate of the non-zero values) could be performed by each node singularly by a least squares method, as done  in \cite{wima14}.

However, with DJ-IST it  is not necessary to split the recovery into two different procedures, one for the estimate of the support and one for the estimate of the non-zero values. Notice that splitting the recovery into two different procedures is more critical when $k$ is not known, as there is  no secure criterion to establish when the support has stabilized.
We now show that one can run DJ-IST also after the support stabilization and get the convergence of $X(t)$.  In the previous section, we have already  proved the numerical convergence, which provides a practical  stopping criterion: at any $t\in\N$, each node should store $x_v(t)$ and $x_v(t-1)$ and stop when the distance between the two iterates is below a fixed threshold depending on the machine epsilon. In this section, we  propose a rigorous point convergence proof and give a description of the convergence points.

Let us consider the system evolution after support stabilization.  First of all, we notice that the problem is no more distributed: communications actually stop and each  node $v\in\V$ proceeds individually. 

As the zeros are now fixed, let us now describe  the evolution of the non-zero components of each $v$. Let us call $\widehat{\Omega}_v\subset \{1,\dots, n \}$ the active set, \emph{i.e.} the estimated support for node $v$, which is constant after support stabilization. We define the partition: $\widehat{\Omega}_v=\widehat{\Omega}_{v,1}(t)\cup \widehat{\Omega}_{v,2}(t)$ where
 $$\widehat{\Omega}_{v,1}(t):=\{i\in\{1,\dots,n\}\text{ s.t. } w_{v,i}(t)>0\}$$ and $\widehat{\Omega}_{v,2}:=\widehat{\Omega}_v\setminus \widehat{\Omega}_{v,1}$, that is, $$\widehat{\Omega}_{v,2}(t)=\{i\in\{1,\dots,n\}\text{ s.t. } w_{v,i}(t)=0\}.$$

First, we remark that the signs of the non-zero components are definitely constant. To see this, suppose the sign changes in the next iteration, \emph{e.g.} $x_{v,i}(t)>0$ and $x_{v,i}(t+1)<0$. Given the numerical convergence, large deviations between consecutive iterations are not possible, and thus we expect $x_{v,i}(t)\in\widehat{\Omega}_{v,1}(t)$, so that $x_{v,i}(t)<\frac{\beta-\overline{\mathds{1}(x_{v,i}(t))}}{\alpha}$. We have then $w_{v,i}(t)>0$, and in particular the more $x_{v,i}(t)$ is close to zero, the more $w_{v,i}(t)$ is large, then we can consider $w_{v,i}(t)\geq \epsilon>0$. To switch the sign we must have $z_{v,i}(t)>\alpha w_{v,i}(t)$ and $z_{v,i}(t+1)<-\alpha w_{v,i}(t)$; however, this is not possible as $z_{v,i}(t)$ numerically converges as well, and after a finite time it cannot overstep an interval of length $2\alpha w_{v,i}(t)>2\epsilon>0$. Following this rationale, an intermediate step  in which $|z_{v,i}(t)|<\alpha w_{v,i}(t)$ is expected, which entangles $x_{v,i}(t)$ into zero.

Bearing this in mind, the evolution of the non-zero components can be expressed as follows.  Let $ A_{\widehat{\Omega}_v}$ be $A_v$ limited to the columns that belong to $\widehat{\Omega}_v$. We have 
\begin{equation}\label{map_gamma}
\begin{split}
 \Gamma_v:&  \R^{\widehat{k}_v}\mapsto \R^{\widehat{k}_v}\\
 \Gamma_v(x)&=M_v(x) x+c_v(x)
 \end{split}
 \end{equation}
 where 
 \begin{equation*}
\begin{split}
&M_v(x)\in \R^{\widehat{k}_v\times \widehat{k}_v},~~~M_v(x)=\alpha^2 D_{v}(x) +I_v-\tau  A_{\widehat{\Omega}_v}^{\mathsf{T}} A_{\widehat{\Omega}_v}  \\
&c_v(x)\in \R^{\widehat{k}_v},~~~c_v(x)=- D_{v}(x)\alpha\text{sgn}(x)(\beta-\widehat{\mathds{1}}_v)+\tau  A_{\widehat{\Omega}_v}^{\mathsf{T}} y_v 
\end{split}
\end{equation*}
and  $I_v$ is the identity matrix of dimensions  $\widehat{k}_v\times \widehat{k}_v$;  $ D_{v}(x)$ is the binary diagonal matrix which has a 1 in position $(i,i)$ if $x_{v,i}\in \widehat{\Omega}_{v,1}$, and zero otherwise;  $\widehat{\mathds{1}}_{v,i}=\overline{\mathds{1}(x_{v,i}(t_1))}$, where $t_1$ is the support stabilization time, then    $\widehat{\mathds{1}}_v$  is  constant.

$M_v(x)$ is positive definite for any $x\in\R^{\widehat{k}_v}$, and whenever a component of $x_v$ is in $\widehat{\Omega}_{v,1}$, the transition matrix $M_v(x)$ is expansive if $  A_{\widehat{\Omega}_v}^{\mathsf{T}} A_{\widehat{\Omega}_v}$ has not  maximum rank. Iterating $\Gamma_v(x)=M_v(x) x+c_v(x)$ we then expect that all the components of $x_v$ will blow up at infinity, but actually this is not the case because when $|x_{v,i}|>\frac{\beta-\widehat{\mathds{1}}_{v,i}}{\alpha}$, we move to regime $\widehat{\Omega}_{v,2}$, in which the system turns out to be a simple gradient descent that converges to a minimum of $\|  A_{\widehat{\Omega}_v} x-y_v\|$. This proves the following Lemma. 
\begin{lemma}
For any $v\in\V$, $t\in\N$, $x_{v}(t)$ is bounded.
\end{lemma}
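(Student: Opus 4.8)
The plan is to show that after support stabilization, each node's non-zero components stay in a bounded region, by exploiting the dichotomy between the two regimes $\widehat{\Omega}_{v,1}$ (where $w_{v,i}>0$ and the map $\Gamma_v$ may be expansive) and $\widehat{\Omega}_{v,2}$ (where $w_{v,i}=0$ and the map is a contraction toward a least-squares solution). The key observation is that the expansive regime is \emph{self-limiting}: a component can only grow while it is in $\widehat{\Omega}_{v,1}$, but by definition $i\in\widehat{\Omega}_{v,1}(t)$ forces $\alpha|x_{v,i}(t)|+\overline{\mathds{1}(x_{v,i}(t))}<\beta$, i.e. $|x_{v,i}(t)|<\frac{\beta-\widehat{\mathds{1}}_{v,i}}{\alpha}\le\frac{\beta}{\alpha}$. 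So any component in the potentially-expansive regime is automatically confined to the box of radius $\beta/\alpha$.

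First I would fix $v$ and $t\ge t_1$, so that the active set $\widehat{\Omega}_v$ and the constant $\widehat{\mathds{1}}_v$ are frozen and $x_v(t)$ evolves under the finite-dimensional map $\Gamma_v$ of \eqref{map_gamma}. I would then partition, component by component at each time $t$, into indices in $\widehat{\Omega}_{v,1}(t)$ and those in $\widehat{\Omega}_{v,2}(t)$. For an index $i\in\widehat{\Omega}_{v,1}(t)$ we already have the a priori bound $|x_{v,i}(t)|<\beta/\alpha$ from the very condition $w_{v,i}(t)>0$; the only thing to control is by how much a single soft-thresholding update can overshoot when a component \emph{leaves} $\widehat{\Omega}_{v,1}$. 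Since $x_{v,i}(t+1)=\soft_{\lambda\alpha w_{v,i}(t)}(z_{v,i}(t))$ with $z_{v,i}(t)=x_{v,i}(t)+\tau A_{v,i}^{\mathsf{T}}(y_v-A_vx_v(t))$, one step changes $|x_{v,i}|$ by at most $\tau\|A_v\|_2(\|y_v\|_2+\|A_v\|_2\|x_v(t)\|_2)$ plus the (bounded) threshold, so transitions are Lipschitz in the current iterate; combined with the regime-$2$ analysis below this prevents escape to infinity.

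Next I would analyze the restriction of the dynamics to $\widehat{\Omega}_{v,2}$: on indices with $w_{v,i}=0$ the soft threshold vanishes and the update is exactly the gradient-descent step $x\mapsto (I-\tau A_{\widehat{\Omega}_v}^{\mathsf{T}}A_{\widehat{\Omega}_v})x+\tau A_{\widehat{\Omega}_v}^{\mathsf{T}}y_v$ for the quadratic $\tfrac12\|A_{\widehat{\Omega}_v}x-y_v\|_2^2$. Under $\tau<\|A_v\|_2^{-2}$ the matrix $I-\tau A_{\widehat{\Omega}_v}^{\mathsf{T}}A_{\widehat{\Omega}_v}$ has spectral radius at most $1$ (strictly less than $1$ on the row space), so this step is non-expansive toward the affine solution set of the least-squares problem; hence the regime-$2$ coordinates cannot run off to infinity either, they are drawn back toward a fixed affine subspace at bounded distance from any starting point. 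Putting the two regimes together: a coordinate is either inside the box $|x_{v,i}|<\beta/\alpha$ (regime $1$), or being pulled toward the least-squares set (regime $2$) from which it can be kicked back into regime $1$ only by moving a bounded amount per step; in either case the trajectory stays in a fixed bounded set determined by $\beta/\alpha$, $\tau$, $\|A_v\|_2$, $\|y_v\|_2$ and the (finite) initial condition. Since $x_{v,i}(t)=0$ for $i\notin\widehat{\Omega}_v$ and the transient before $t_1$ consists of finitely many bounded iterates, $x_v(t)$ is bounded for all $t\in\N$.

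The main obstacle is making the informal ``expansive map, but confined'' argument fully rigorous at the switching instants: when a component crosses the boundary $|x_{v,i}|=\frac{\beta-\widehat{\mathds{1}}_{v,i}}{\alpha}$ between the two regimes, $\Gamma_v$ is non-smooth and one must argue that the overshoot is uniformly bounded and that a coordinate cannot oscillate across the boundary in a way that accumulates unbounded growth. The clean way to handle this is to show that once $|x_{v,i}(t)|\ge\frac{\beta-\widehat{\mathds{1}}_{v,i}}{\alpha}$ the dynamics on that coordinate is governed (until the next crossing) by the non-expansive regime-$2$ map, so the value can only decrease in distance to the least-squares set; hence it re-enters regime $1$, if at all, at a value still bounded by the regime-$2$ contraction estimate, and regime $1$ itself caps it at $\beta/\alpha$. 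Iterating this alternation gives a uniform bound independent of $t$, which is exactly the claim of the Lemma.
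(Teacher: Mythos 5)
Your argument is essentially the paper's own: the paper proves the lemma by exactly this dichotomy, observing that a component can only be subject to the (potentially expansive) regime $\widehat{\Omega}_{v,1}$ while $|x_{v,i}|<\frac{\beta-\widehat{\mathds{1}}_{v,i}}{\alpha}$, and that once it exceeds this threshold it enters $\widehat{\Omega}_{v,2}$, where the update reduces to a non-expansive gradient-descent step for $\|A_{\widehat{\Omega}_v}x-y_v\|$. Your treatment of the switching instants and the per-step overshoot is actually more careful than the paper's one-paragraph sketch, but the underlying idea is the same.
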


The dynamical system of \eqref{map_gamma} is a switched linear system: when $x_{v,i}(t)$ switches from $\widehat{\Omega}_{v,1}$ to $\widehat{\Omega}_{v,2}$, the entry $(i,i)$ of $D_v$ switches from 1 to 0, and vice versa. Possible oscillations between the two regions make  the convergence proof more complicated and technical. To simplify it, we do the following realistic assumption. 

\begin{assumption}\label{cosi_contraggo}
For any $v\in\V$ and $t\in\N$,    $\max |x_{v,i}(t)|<\frac{\beta-\widehat{\mathds{1}}_{v,i}}{\alpha}$, that is, $x_{v,i}(t)\in \widehat{\Omega}_{v,1}$.
\end{assumption}

This assumption is commonly fulfilled as generally we  set $\alpha$ much smaller than $\beta$.  Therefore, $|x_{v,i}(t)|\geq \frac{\beta-\widehat{\mathds{1}}_{v,i}}{\alpha}$ implies $\fun(X)(t)$ of the order of $\frac{\beta-\widehat{\mathds{1}}_{v,i}}{\alpha}$, which is very high. For example, in our simulations (Section \ref{sec:numerical_results}), we set $\alpha=5\cdot 10^{-4}$ and $\beta=1.1$, which implies $\fun(X(t))$ of order $10^6$ for $|x_{v,i}(t)|\geq \frac{\beta-\widehat{\mathds{1}}_{v,i}}{\alpha}$. Therefore, it suffices to set a reasonable initial condition to have $\fun(X(0))$ smaller than such values: since $\fun(X(t))$ is not increasing, this guarantees that $|x_{v,i}(t)$ will never exceed $\frac{\beta-\widehat{\mathds{1}}_{v,i}}{\alpha}$.
%

Under Assumption \ref{cosi_contraggo}, the evolution of our system is simply linear: 
\begin{equation}\label{map_gamma_contrattiva}
\begin{split}
 \Gamma_v:&  \R^{\widehat{k}_v}\mapsto \R^{\widehat{k}_v}\\
 \Gamma_v(x)&=M_v x+c_v
 \end{split}
 \end{equation}
 where 
 \begin{equation*}
\begin{split}
M_v&= (\alpha^2+1)I_v-\tau  A_{\widehat{\Omega}_v}^{\mathsf{T}} A_{\widehat{\Omega}_v} \in \R^{\widehat{k}_v\times \widehat{k}_v} \\
c_v&=\alpha\text{sgn}(x)(\beta-\widehat{\mathds{1}}_v)+\tau  A_{\widehat{\Omega}_v}^{\mathsf{T}} y_v \in \R^{\widehat{k}_v}
\end{split}
\end{equation*}

From previous observations, we know that $x_{v,i}(t)$ is bounded, so such $M_v$ cannot be expansive. We therefore conclude that $ A_{\widehat{\Omega}_v}^{\mathsf{T}} A_{\widehat{\Omega}_v}$ must have maximum rank. Assuming that the components of $A_v$ are randomly chosen according a continuous distribution,  $A_v^{\mathsf{T}}A_v$ has rank $m$; since  $ A_{\widehat{\Omega}_v}^{\mathsf{T}} A_{\widehat{\Omega}_v}$ has  dimension  $\widehat{k}_v$, we conclude that it can have maximum rank $\widehat{k}_v$ only if  $\widehat{k}_v\leq m$. We observe that this makes sense, as this is the case for  iterative soft thresholding \cite{tib13}, which is the basis for our algorithm. This condition is necessary but also sufficient to have maximum rank, provided that $\tau A_{\widehat{\Omega}_v}^{\mathsf{T}} A_{\widehat{\Omega}_v}$ has no eigenvalues equal to $\alpha^2$ (if $A_v$ is random, this occurs with probability 0). Moreover, if $\alpha$ is sufficiently small, we have $\left\|M_v\right\|_2<1$

Finally, we have the following convergence theorem.
 \begin{theorem}\label{theo:conv_fp}
 For a sufficiently small $\alpha$, the sequence $X(t)$ generated by DJ-IST (Algorithm \ref{DJ-IST}) converges to a local minimum of $\fun(X)$.
 Moreover, for each $v\in\V$, the non-zero components of $x_v(t)$ converge to
\begin{equation}\label{puntofisso}
 [I_v-M_v]^{-1}c_v=[\tau  A_{\widehat{\Omega}_v}^{\mathsf{T}} A_{\widehat{\Omega}_v}-\alpha^2 I_v]^{-1}[\alpha s_v(\beta-\widehat{\mathds{1}}_v)+\tau  A_{\widehat{\Omega}_v}^{\mathsf{T}} y_v ]\end{equation}
where $s_v=\text{sgn}(x_v(t_1))$, $t_1$ being the support stabilization time.
\end{theorem}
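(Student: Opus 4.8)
The plan is to combine the structural results already established---numerical convergence (Proposition \ref{decrescenza}), support stabilization (Theorem \ref{theo:supp_stab}), sign stabilization (discussed before \eqref{map_gamma}), and boundedness (Lemma)---with a contraction argument on the reduced linear dynamics \eqref{map_gamma_contrattiva}. The key observation is that, once $t\geq t_1$, the problem decouples across nodes and the zero components are frozen, so it suffices to study, for each fixed $v$, the affine iteration $x_v^{\widehat\Omega}(t+1)=\Gamma_v(x_v^{\widehat\Omega}(t))=M_vx_v^{\widehat\Omega}(t)+c_v$ on $\R^{\widehat k_v}$, where $M_v=(\alpha^2+1)I_v-\tau A_{\widehat\Omega_v}^\mathsf{T}A_{\widehat\Omega_v}$ and $c_v$ is constant (it depends only on the frozen signs $s_v$ and the constant $\widehat{\mathds{1}}_v$).

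First I would argue that $M_v$ cannot be expansive: if it had an eigenvalue of modulus $\geq 1$ in absolute value coming from a direction where $A_{\widehat\Omega_v}^\mathsf{T}A_{\widehat\Omega_v}$ is rank-deficient, iterating the affine map would force $\|x_v^{\widehat\Omega}(t)\|\to\infty$ along that direction, contradicting the boundedness Lemma. Hence $A_{\widehat\Omega_v}^\mathsf{T}A_{\widehat\Omega_v}$ has full rank $\widehat k_v$, which (as noted in the text) forces $\widehat k_v\leq m$ and, for random $A_v$, makes $\tau A_{\widehat\Omega_v}^\mathsf{T}A_{\widehat\Omega_v}$ positive definite with no eigenvalue equal to $\alpha^2$ almost surely. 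Then for $\alpha$ small enough the eigenvalues of $M_v$, namely $\alpha^2+1-\tau\mu_j$ with $\mu_j>0$ the eigenvalues of $A_{\widehat\Omega_v}^\mathsf{T}A_{\widehat\Omega_v}$, all lie strictly inside the unit interval (upper bound: $\alpha^2+1-\tau\mu_{\min}<1$ for $\alpha^2<\tau\mu_{\min}$; lower bound: $\alpha^2+1-\tau\mu_{\max}>-1$ using $\tau<\|A_v\|_2^{-2}$), so $\|M_v\|_2<1$ and $\Gamma_v$ is a contraction. By the Banach fixed-point theorem, $x_v^{\widehat\Omega}(t)$ converges to the unique fixed point $x_v^\infty=(I_v-M_v)^{-1}c_v$; substituting $I_v-M_v=\tau A_{\widehat\Omega_v}^\mathsf{T}A_{\widehat\Omega_v}-\alpha^2 I_v$ and $c_v=\alpha s_v(\beta-\widehat{\mathds{1}}_v)+\tau A_{\widehat\Omega_v}^\mathsf{T}y_v$ yields formula \eqref{puntofisso}. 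The full iterate $X(t)$ then converges, since the zero components are identically zero for $t\geq t_1$ and each nonzero block converges.

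It remains to identify the limit as a local minimum of $\fun$. Here I would argue that the limit point $X^\infty$ is a fixed point of the thresholding map that updates $X(t)$; since the update was built (via the surrogate functional \eqref{fun_dep_xvi} and the soft-thresholding minimization) to decrease $\fun$, and since $\fun(X(t))$ is non-increasing and convergent (Proposition \ref{decrescenza}), a fixed point with $w_{v,i}>0$ on the active set must satisfy the first-order stationarity conditions of $\fun$ restricted to the stabilized support and sign pattern: on $\widehat\Omega_v$, $\nabla_{x_v}\bigl[\tfrac12\|y_v-A_vx_v\|_2^2+\lambda\sum_i g(\alpha|x_{v,i}|+\widehat{\mathds{1}}_{v,i})\bigr]=0$, which is exactly the linear system solved by $x_v^\infty$; off $\widehat\Omega_v$ the components are zero and, by the thresholding rule, $|z_{v,i}|\leq\lambda\alpha w_{v,i}$ certifies that $\fun$ cannot be decreased by activating those components. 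Because the active set and signs are locally constant around $X^\infty$ and $\fun$ restricted to that cell is strictly convex (the Hessian $\tau A_{\widehat\Omega_v}^\mathsf{T}A_{\widehat\Omega_v}-\alpha^2 I_v\succ0$ after rescaling, i.e.\ the MCP second-order term is dominated), $X^\infty$ is a strict local minimum.

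The main obstacle I anticipate is precisely this last step---rigorously promoting ``fixed point of the iteration'' to ``local minimum of the non-convex, non-continuous $\fun$''. The discontinuity of $\fun$ at zero (the $\ell_0$ term) means the usual stationarity calculus does not apply directly; one has to handle the active components via smooth first-order conditions and the inactive ones via the thresholding inequality separately, and then check that no combination of sign flips or reactivations in a neighborhood of $X^\infty$ decreases $\fun$. Making the ``for a sufficiently small $\alpha$'' quantitative---tying it simultaneously to the contraction condition ($\alpha^2<\tau\mu_{\min}(A_{\widehat\Omega_v}^\mathsf{T}A_{\widehat\Omega_v})$ for every $v$) and to Assumption \ref{cosi_contraggo} remaining self-consistent along the whole trajectory---is the other delicate point, though the excerpt's energy-bound argument already indicates how to close it.
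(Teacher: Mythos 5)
Your proposal is correct and, for the core of the theorem, follows the paper's own route exactly: after support and sign stabilization the dynamics reduce to the affine map $\Gamma_v(x)=M_vx+c_v$ with $M_v=(\alpha^2+1)I_v-\tau A_{\widehat{\Omega}_v}^{\mathsf{T}}A_{\widehat{\Omega}_v}$; boundedness rules out expansiveness and forces full rank of $A_{\widehat{\Omega}_v}^{\mathsf{T}}A_{\widehat{\Omega}_v}$; for small $\alpha$ one gets $\left\|M_v\right\|_2<1$, and the Banach fixed-point theorem (equivalently, summing the geometric series) yields convergence to $(I_v-M_v)^{-1}c_v$, which is \eqref{puntofisso}. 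Your eigenvalue bookkeeping ($\alpha^2+1-\tau\mu_j$, with the upper bound requiring $\alpha^2<\tau\mu_{\min}$ and the lower bound following from $\tau<\left\|A_v\right\|_2^{-2}$) is a slightly more explicit version of the paper's ``for a sufficiently small $\alpha$, $\left\|M_v\right\|_2<1$.''

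The one place you diverge is the step you flag as the main obstacle: certifying that the limit is a local minimum of $\fun$. You propose first-order stationarity on the active set combined with the thresholding inequality $|z_{v,i}|\leq\lambda\alpha w_{v,i}$ for the inactive components, and you worry that the discontinuity of $\fun$ blocks the usual calculus. The paper's argument is shorter and exploits that discontinuity rather than fighting it: the fixed point is identified as the unique minimizer of the smooth restricted functional $\tau\left\|A_{\widehat{\Omega}_v}x-y_v\right\|+\sum_i\bigl(2\beta\alpha|x_i|-\tfrac{1}{2}\alpha^2x_i^2\bigr)$ on the cell of fixed support and signs, so perturbations of the non-zero components increase $\fun$; and any perturbation of a zero component flips its indicator to $1$, producing a discrete, strictly positive jump in the penalization term that dominates the infinitesimal change in the quadratic data term. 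No stationarity condition or thresholding inequality is needed off the support. So your approach would work but is heavier than necessary; the ``non-continuity'' of $\fun$ at zero is precisely what makes the inactive directions trivially locally optimal. Your second concern, making ``sufficiently small $\alpha$'' consistent with Assumption \ref{cosi_contraggo} along the whole trajectory, is indeed left at the informal energy-bound level in the paper as well.
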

\begin{proof}
For a sufficiently small $\alpha$, $\left\|M_v\right\|_2<1$, that is, the map \eqref{map_gamma_contrattiva} is contractive. Therefore, a fixed point exists and convergence to it is guaranteed (no matter which is the initial point) by the Banach fixed-point theorem. In particular, iterating the map \eqref{map_gamma_contrattiva} we obtain a geometric series that converges to \eqref{puntofisso}. 

This concludes the convergence of the non-zero components, which along with support stabilization proved in Theorem \ref{theo:supp_stab} gives the convergence.

We  remark that the point \eqref{puntofisso} turns out to be the unique minimum of 
$$\tau\left\| A_{\widehat{\Omega}_v} x-y_v\right\|+\sum_{i=1}^n 2\beta \alpha|x_i|-\frac{1}{2}\alpha^2x_i^2$$
and, as a consequence, a local minimum of $\fun (X)$. In fact, if we  perturb the non null components we increase $\fun$ due to the last statement, while if we perturb the zero components, the indicator function switch to 1 and cause a sure increase of $\fun$.
\end{proof}

Regarding the convergence point \eqref{puntofisso}, we observe that this coincides with the true value if $x^{\star}_v=\frac{1}{\alpha} s_v(\beta-\widehat{\mathds{1}}_v)$, otherwise a bias is present. This was expected as $\ell_1$ minimum is known to be bias proportionally to the the $\ell_1$ weight. In our reweighted $\ell_1$ setting, however an accurate choice of $\beta$ and $\alpha$ could reduce this bias. Such optimization will be focus of our future work.

\section{Numerical results}\label{sec:numerical_results}
In this section, we show the results of some numerical simulations and compare the performance of DJ-IST with the state-of-the-art algorithms DC-OMP 1 and DC-OMP 2 \cite{wima14}. 
\subsection{DC-OMP 1 and DC-OMP 2}
The rationale behind DC-OMP 1 \cite[Algorithm 3]{wima14} is the following: each node performs a step of OMP and computes an index candidate (by evaluating the largest correlation between residual and columns of the sensing matrix) to add to the support; the candidates are then locally shared, and the candidates with more than one occurrence are added to the support, except for the case that those candidates do not change the support (in this case, each node introduces its own candidate); if all the candidates have one occurrence, each node adds its own candidate. A slight modification is considered when the communication is complete. Notice that DC-OMP 1 is very similar to DiOMP \cite{sun14}, with some differences in the voting procedure, which makes DC-OMP 1 more reliable. 
In DC-OMP 2 \cite[Algorithm 4]{wima14}, instead,  each node locally shares not only the index candidate, but all the correlations between  residual and columns of its sensing matrix. The index candidate is then chosen fusing the correlations  and then transmitted to all the network via multi-hop communication. In DC-OMP 2 more information is shared with respect to DC-OMP 1, then better performance can be expected.

The goal of this section is to numerically prove that DJ-IST is a good trade-off between DC-OMP 1 and DC - OMP 2, in terms of support reconstruction accuracy and use of the communication links.

\subsection{Simulations setting}\label{sub:simulations_setting}
For all our experiments, the original signals $x^{\star}_v$ have joint support   generated uniformly at random, and the non-zero elements are drawn from a standard Gaussian
distribution. The entries of the sensing matrices are generated according to a standard Gaussian
distribution as well, and then normalized by $\sqrt{m}$. Results are averaged over 250 different runs, obtained by generating 50 different sets of  $x^{\star}_v$ and trying $5$ different sensing matrices for each. We stop the algorithm at time $T=\min\{t\in\N\text{ s.t. } |x_{v,i}(t+1)-x_{v,i}(t))|<\epsilon=10^{-5}$,  for all  $v\in\V, i=1,\dots,n\}$. The parameters $\lambda$, $\alpha$, $\beta$ and $\tau$ have been empirically set; in all our simulations, 
$\lambda=1$, $\alpha=5\times 10^{-4}$, $\beta=1.1$, $\tau=2e-2$. The parameter $p$ is not actually fixed, as naturally few switches from zero to non-zero occur (in all our simulations, we observed at most 9 switches).

\subsection{Support recovery performance}

We evaluate two performance metrics for the support: the average support error (ASE), defined as

\begin{equation}
\text{ASE}=  \sum_{v\in\V}\frac{\left\|\mathds{1}(x^{\star}_v) - \widehat{\omega}_v  \right\|_0}{n\cardV}
\end{equation}

and the probability of exact support recovery (PESR)
\begin{equation}
\text{PESR}=  \sum_{v\in\V}\frac{\mathds{I}( \mathds{1}(x^{\star}_v) - \widehat{\omega}_v)  }{\cardV}
\end{equation}
where $\mathds{I}(x)$ is the function from $\R^n$ to $\R$ that returns 1 when the vector $x=(0,0,\dots,0)^{\mathsf{T}}\in\R^n$  and 0 otherwise. PESR assesses how many sensors  estimate the right support, while ASE measures how large is the error in the support for each sensor, on average. 
\begin{figure*}[t]
\centering
\includegraphics[width=0.95\columnwidth]{./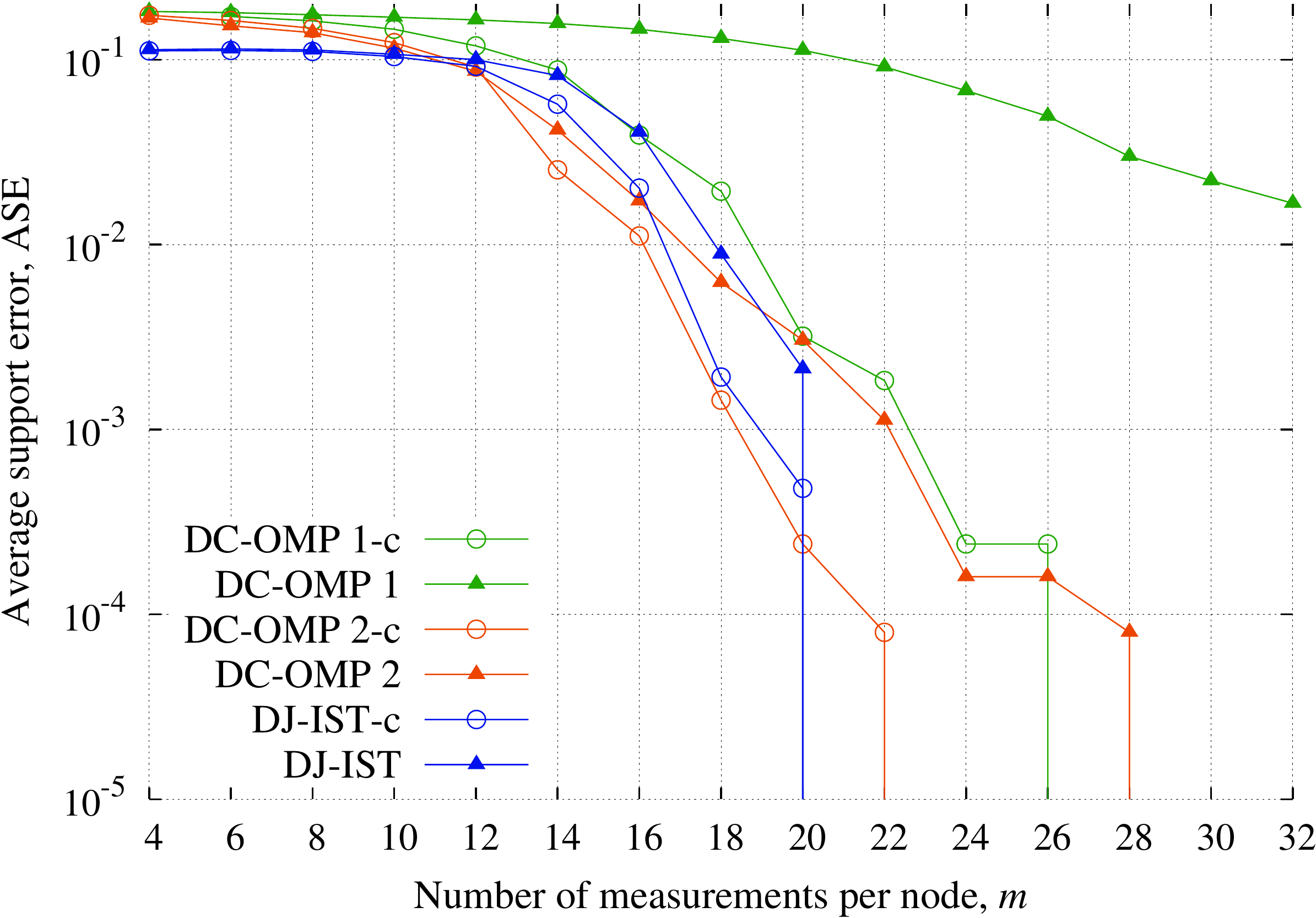}\qquad
\includegraphics[width=0.95\columnwidth]{./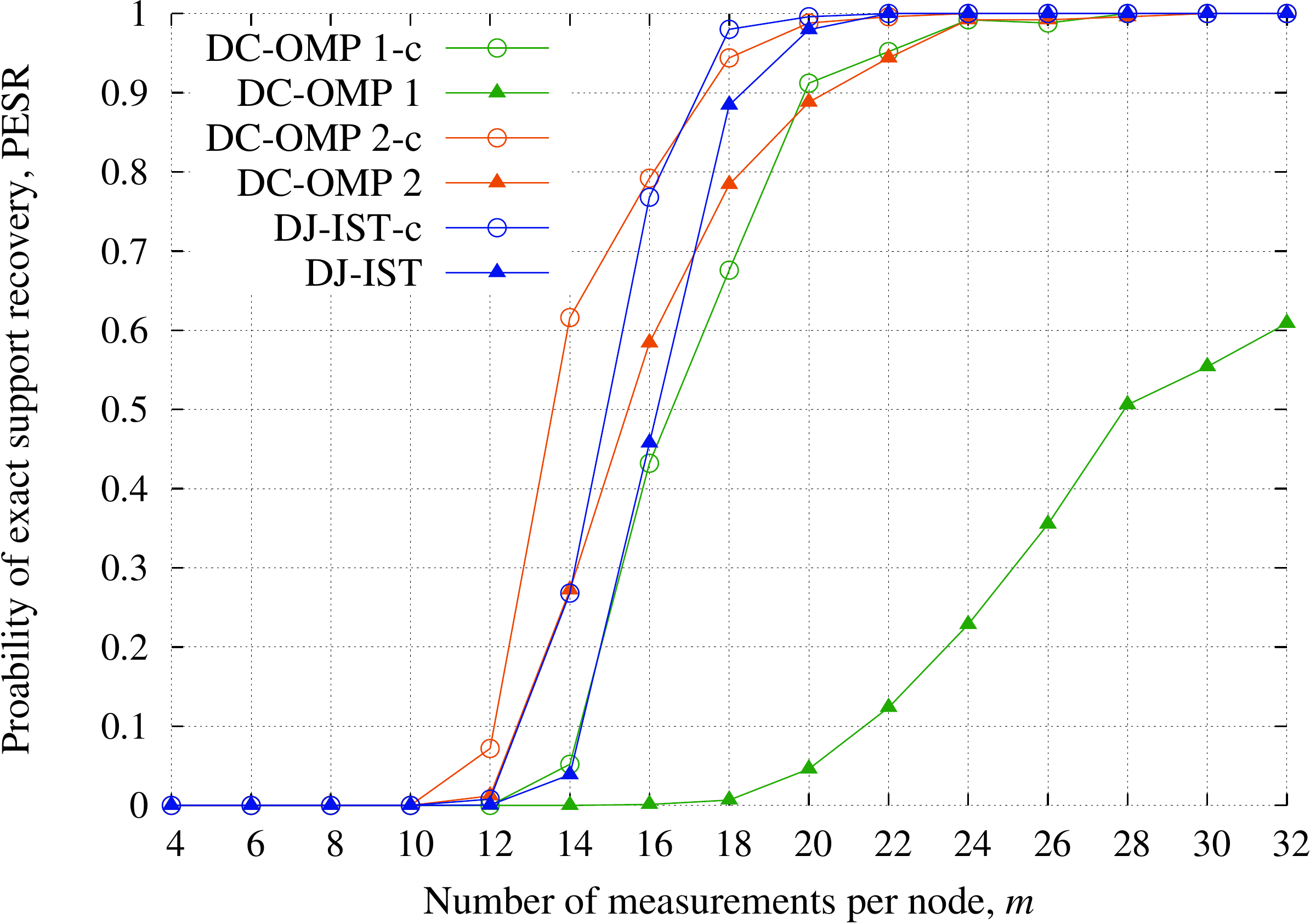} 
\caption{ASE (left) and PESR (right) as a function of $m$, $V=10$,  $\lambda=1$, $\alpha=5\times 10^{-4}$, $\beta=1.1$, $\tau=2\times 10^{-2}$.}
\label{fig:measNode}
\end{figure*}
\begin{figure*}[ht]
\centering
\includegraphics[width=0.95\columnwidth]{./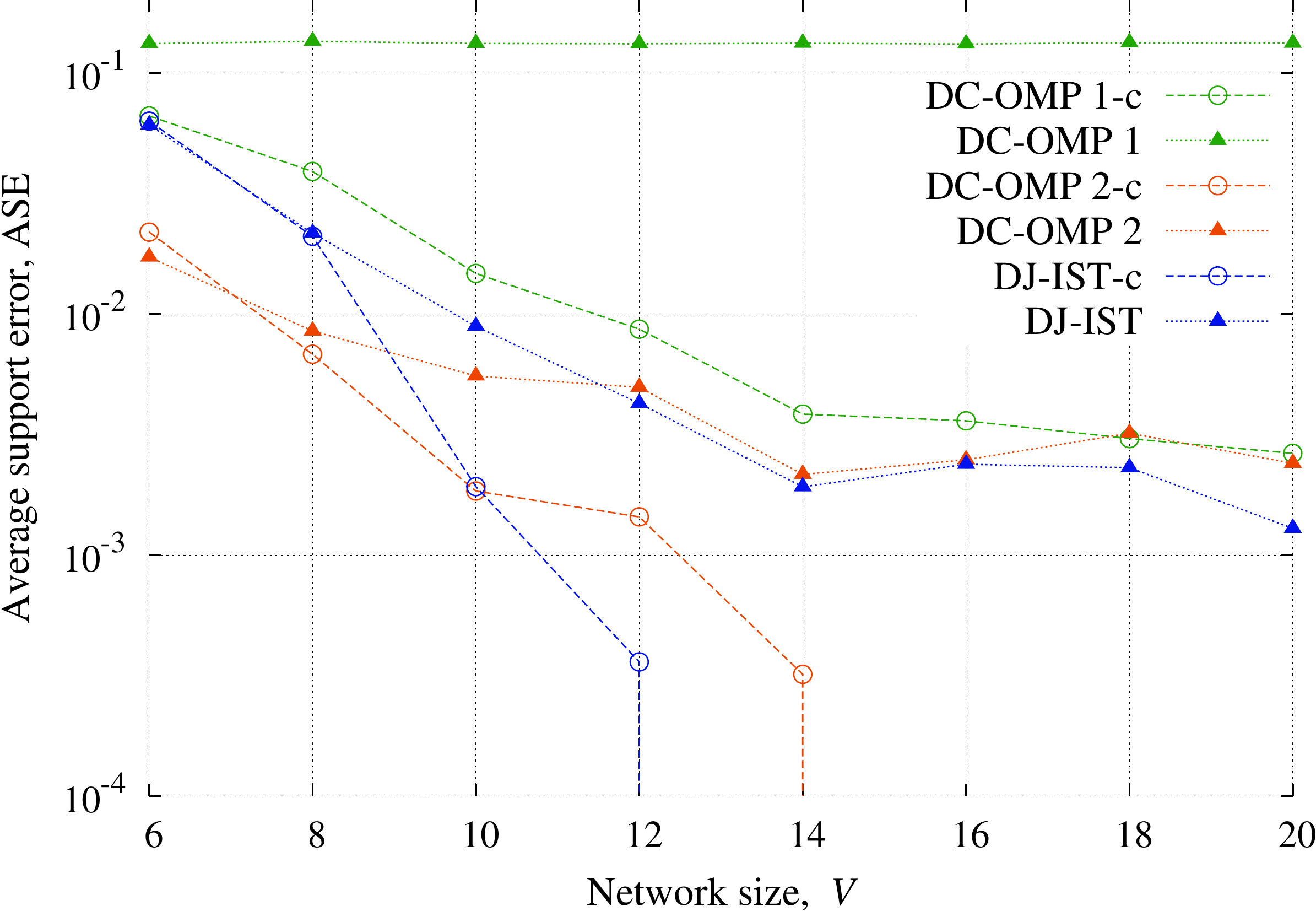}\qquad 
\includegraphics[width=0.95\columnwidth]{./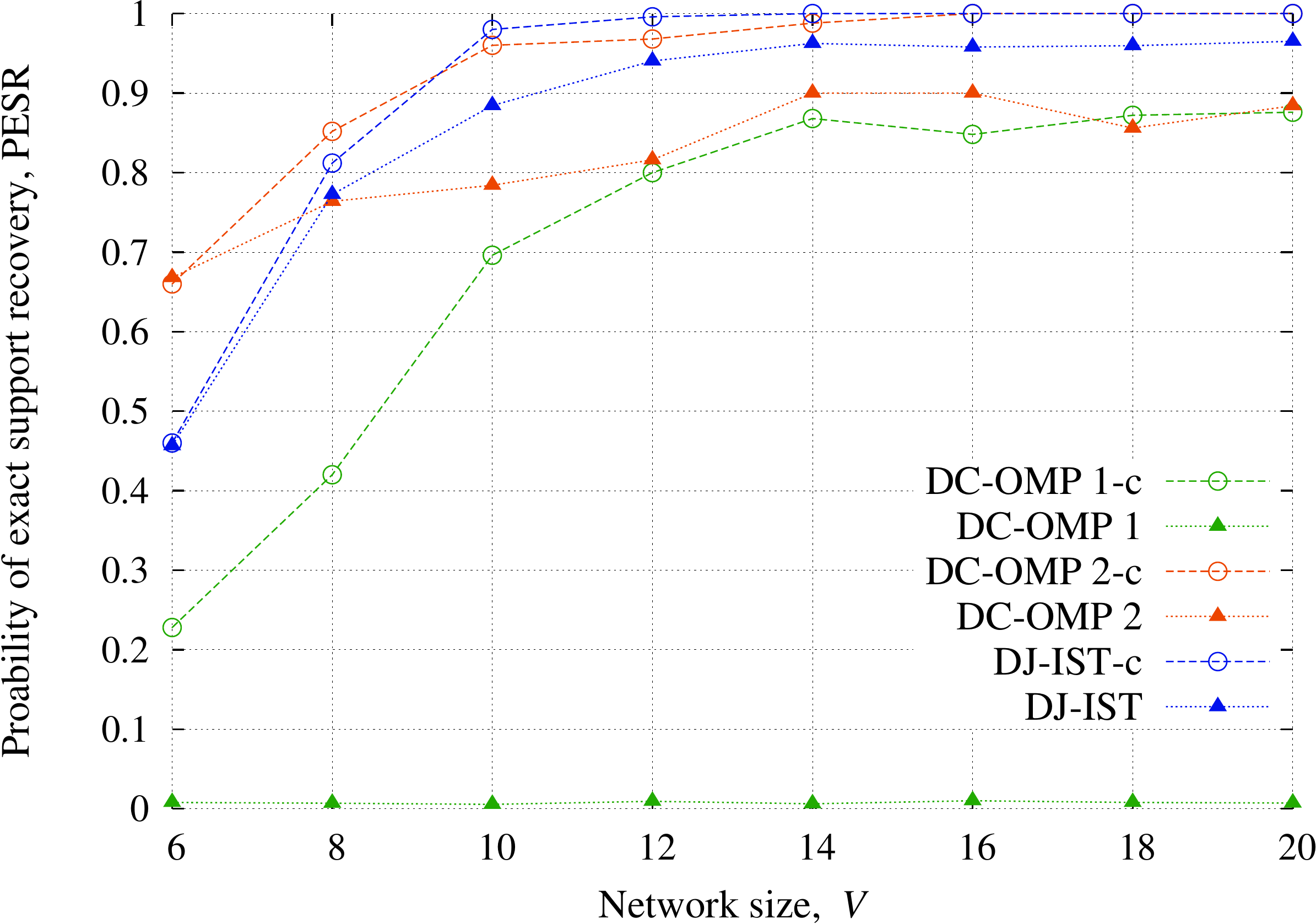}
\caption{ASE (left) and PESR (right) as a function of $\cardV$, $m=18$,  $\lambda=1$, $\alpha=5\times 10^{-4}$, $\beta=1.1$; $\tau=8\times 10^{-3}$ for complete graphs, except for $\cardV\in{6,8}$ where $\tau=3\times 10^{-3}$; $\tau=2\times 10^{-2}$ for 5-regular graphs, except for $\cardV\in{6,8}$ where $\tau=8\times 10^{-3}$} 
\label{fig:numNodes}
\end{figure*}

In Figure \ref{fig:measNode}, we show the ASE and the PESR for a network of $V=10$ nodes, varying of the number of measurements per node $m$ between 4 and 32. We show both the complete graph case (indicated by the postfix {\mbox{'-c'}}) and the regular case with $d=5$ (say, each node has 4 neighbors). The ASE is shown in logarithmic scale: a vertical line indicates the $m$ beyond which the ASE is exactly zero. We immediately notice that DJ-IST (in both complete and non complete regimes) achieves null ASE with a smaller $m$ than all the other methods. Specifically, we observed that $m=22$ is sufficient for DJ-IST to have perfect support detection, while $m=24,28,30$ are necessary respectively for DC-OMP 2-c, DC-OMP 1-c, DC-OMP 2. We further remark that DC-OMP 1 never gets zero in the considered range. 

We also notice that for any considered $m$  DJ-IST  performs better than DC-OMP 1 and less worse than DC-OMP 2 (except for vary small $m$, where DJ-IST is the best). Recalling that DC-OMP 2 always envisages a complete topology (as it exploits global (multihop) communication in the non-complete case), the fact  that DJ-IST-c is very close to DC-OMP 2 is remarkable. Analogous considerations can be done for the PESR curve.

In Figure \ref{fig:numNodes}, we show the ASE and the PESR for fixed $m=18$ and varying $V$. Again, we appreciate that DJ-IST outperforms DC-OMP 1, while the PESR of DJ-IST is  better than that of DC-OMP 2 in the non-complete regime, for large networks.

We remark that for non-complete topologies, support agreement among the nodes is not guaranteed; analytical conditions to get consensus will be subject of future research. However, if necessary,  a consensus algorithm can be run after our procedure to obtain the same support over all the network. 

\subsection{Signal estimation performance}
In addition to the support recovery analysis, we report some observations about the signal estimation accuracy of DJ-IST. In fact, as already remarked, DJ-IST, as a difference from  \cite{sun14, wima14}, performs both support and signal estimation. 

\begin{figure*}
\centering
\includegraphics[width=0.95\columnwidth]{./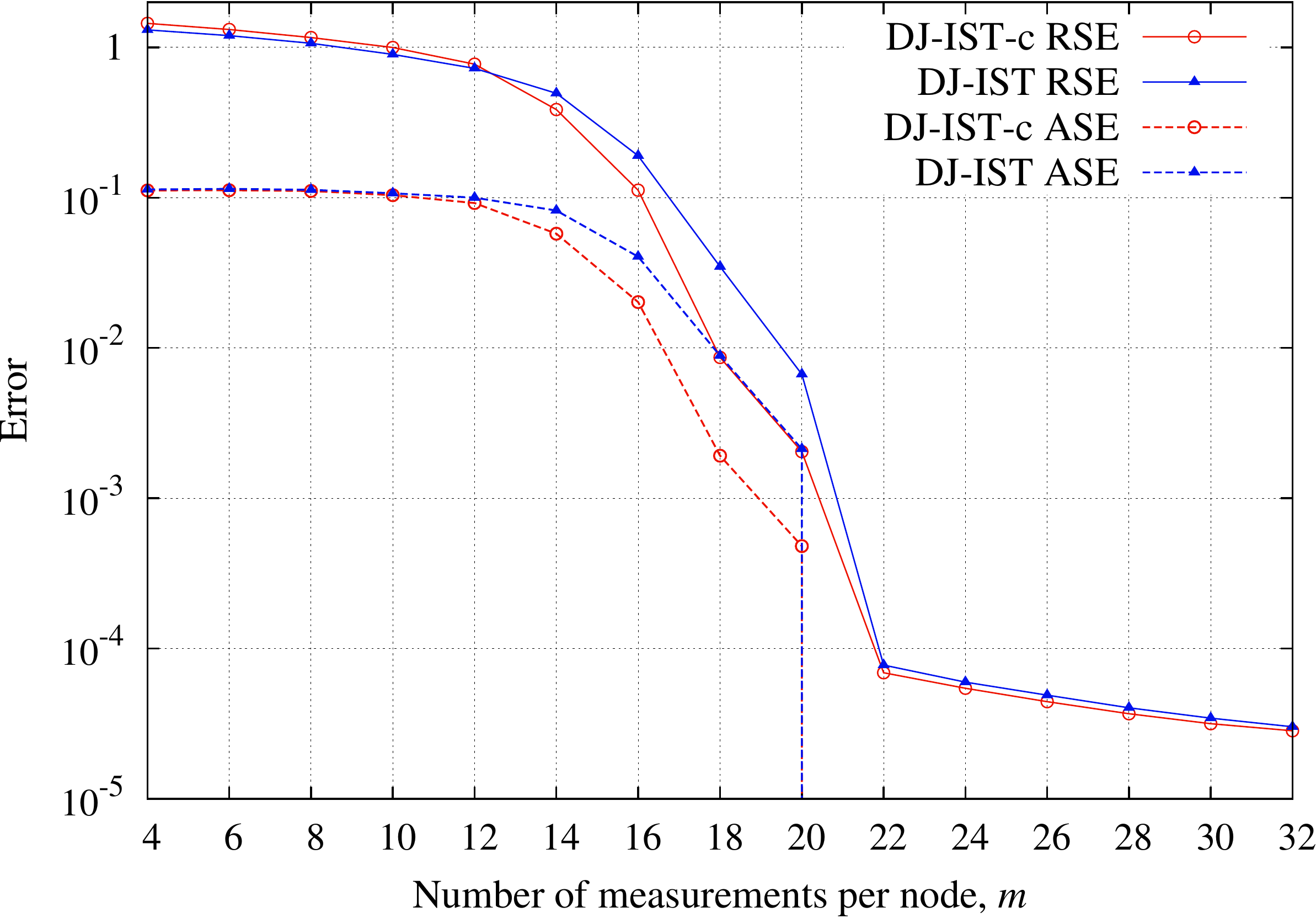}\qquad
\includegraphics[width=0.95\columnwidth]{./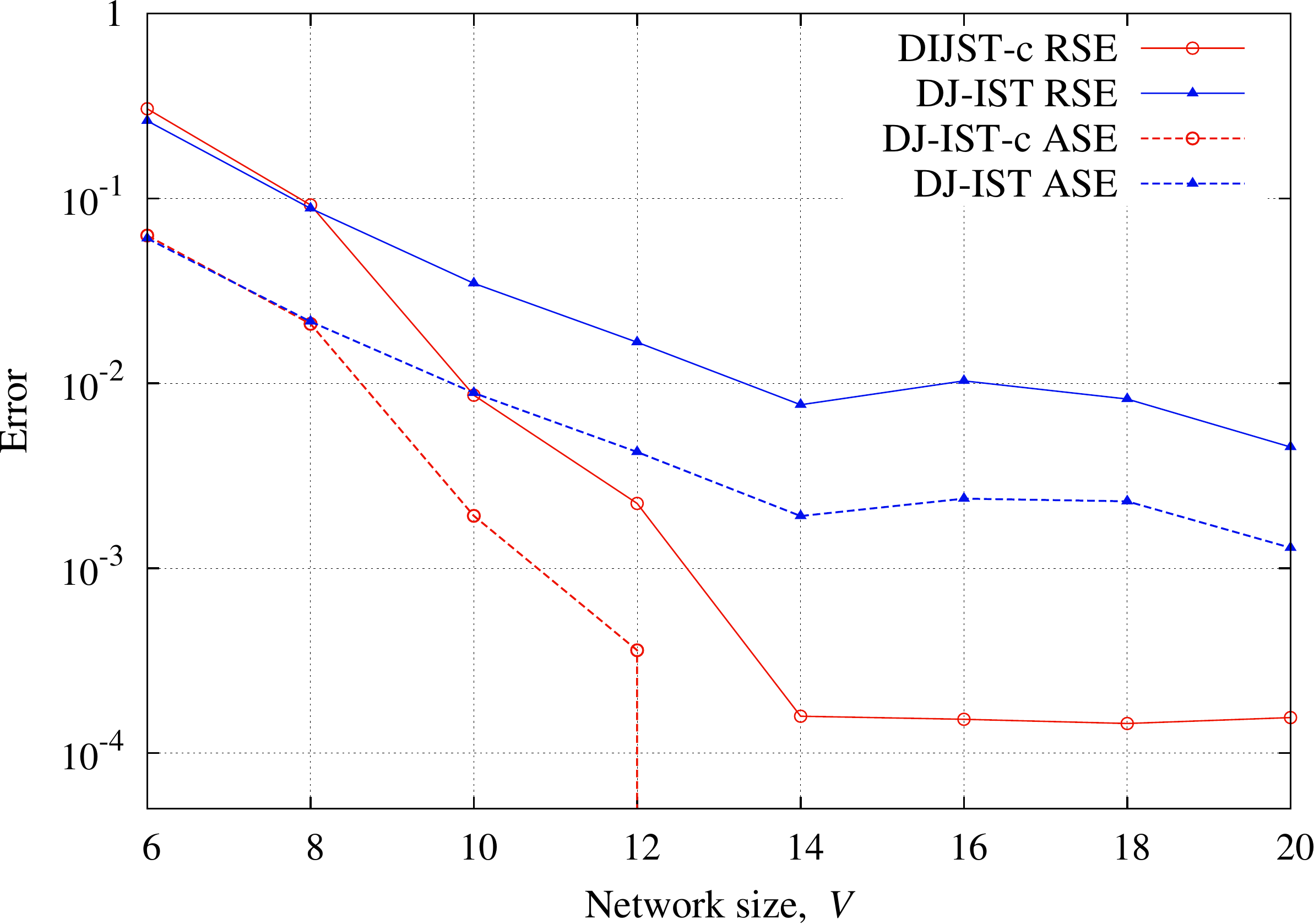} 
\caption{RSE and ASE as a function of $m$ (left) and $V$ (right).}\label{fig:RSE}
\end{figure*}
In Figure \ref{fig:RSE} we depict the mean relative square error (RSE) which we define as 
\begin{equation}
\text{RSE}=  \frac{\sum_{v\in\V} \left\|x^{\star}_v - \widehat{x}_v  \right\|_2^2}{\sum_{v\in\V} \left\|x^{\star}_v  \right\|_2^2}.
\end{equation}
The used parameters  are the ones used in the experiments presented in the previous paragraph, and  RSE and ASE are shown as   functions of  $m$ (left) and $V$ (right). As we are adopting a logarithmic scale, we visualize a vertical line when the ASE goes to zero. In these graphs, we can appreciate that the RSE follows the behavior of the ASE. A small bias occurs in the RSE when the ASE is null, which is expected due to our Lasso approach. The reweighting method reduces the Lasso bias, but does not totally remove it, even though because of Assumption \ref{cosi_contraggo}: shrinkage is reduced, but actually never removed for non-zero coefficients.

\subsection{Analysis of transmission efficiency}
We now analyze the transmission efficiency of DJ-IST, compared to DC-OMP 1 and 2 \cite{wima14}, in terms of number of transmitted bits over each network link. The range of transmitted bits can be analytically evaluated for all the three algorithms, as we now show. Afterwards, we will present some statistics from numerical simulations.

Let us consider the non-complete graph case, and for simplicity let us assume a $d$-regular topology.
In DJ-IST and DC-OMP 1 only  indices in $\{1,\dots,n\}$ are transmitted, then each index can be encoded with $\lfloor \log_2 n \rfloor +1$ bits. In DC-OMP 1, each node $v\in V$ transmits to its $d-1$ neighbors its candidate for activation, namely, the coefficient it would add to the support; afterwards, basically each coefficient with more than two votes is added to the support. Therefore, at each step a maximum a $\lfloor \frac{d}{2} \rfloor$ coefficient could be added, and to complete the support the minimum possible number of step is $\lceil k/\lfloor \frac{d}{2} \rfloor \rceil$, while the maximum is $k$ (one coefficient at each step; we recall that $k$ has to be exactly known in DC-OMP approach, which is not required for DJ-IST). In conclusion, in DJ-IST the total number of bits transmitted over a link is in the range $\cardV (d-1)(\lfloor \log_2 n \rfloor +1)\left[\lceil k/\lfloor \frac{d}{2} \rfloor \rceil,k\right]$.

In DC-OMP 2, the nodes share with neighbors the correlation vector in $\R^n$; assuming $q$ bits for each real value, this amounts to $\cardV(d-1)qn$ bits per iteration. The nodes use such information to choose their own candidate coefficient, and they broadcast it to all the network, which amounts to $\cardV(\cardV-1)(\lfloor \log_2 n \rfloor +1)$. The voting procedure to build the support is  analogous to DC-OMP 1. Hence,  the total number of transmitted bits is in the range $\cardV \left[(d-1)q +(V-1)(\lfloor \log_2 n \rfloor +1)\right]\left[\lceil k/\lfloor \frac{d}{2} \rfloor \rceil,k\right]$.

Differently from DC-OMP strategies, in DJ-IST all the coefficients start as active and then, hopefully, $n-k$ of them are switched to zero.  
Each $v\in V$ communicates to neighbors the switches for non-zero to zero, and vice versa. If all the nodes remain non-zero, no communications occurs, while the maximum is $2pn \cardV (d-1)$, where $p$ is the maximum number of switches from zero to non-zero discussed in Section \ref{sec:algorithm}\footnote{$2pn$ stands for the worst case in which all the coefficients oscillate as long as can, and then switch off to zero.}. 
\begin{table}
\begin{center}
\caption{Transmitted bits:  ranges for $d$-regular topologies ($r=\lfloor \log_2 n \rfloor +1$)}
\centering
\resizebox{1\columnwidth}{!} {
\begin{tabular}{| l | l | l |}
\hline
\hline
Algorithms  & Min  & Max \\
\hline
DC-OMP 1& $\cardV (d-1)r\lceil k/\lfloor \frac{d}{2} \rfloor \rceil$ & $\cardV (d-1)rk$   \\
DC-OMP 2& $\cardV \left[(d-1)q +(V-1)r\right]\lceil k/\lfloor \frac{d}{2} \rfloor \rceil$ & $\cardV \left[(d-1)q +(V-1)r\right]k$\\
DJ-IST& $0$ & $2pn \cardV (d-1)$  \\
\hline
\end{tabular} 
\label{tab:0}}
\end{center}
\end{table}

We sum up these ranges in Table \ref{tab:0}. Next, in Tables \ref{tab:1} and \ref{tab:2}, we show transmission load statistics  taken from our simulations over regular graphs with degree $d=5$ (250 runs). Real values are assumed to be quantized over $q=16$ bits. 

\begin{table}
\begin{center}
\caption{Transmitted bits: statistics over all the simulations with $n=100$, $k=10$, $\cardV=10$, $m\in\{4,6,8,\dots,32\}$)}
\begin{tabular}{| l | r | r | r |}
\hline
\hline
Algorithms  & Min  & Max  & Mean\\
\hline
DC-OMP 1& 2520 & 2800  & 2795  \\
DC-OMP 2& 193890 & 387780 & 298590\\
DJ-IST& 29288 & 39508  & 32938  \\
\hline
\end{tabular} 
\label{tab:1}
\end{center}
\end{table}

\begin{table}
\begin{center}
\caption{Transmitted bits: statistics over all the simulations with $n=100$, $k=10$, $\cardV\in\{6,20\}$, $m=18$)}
\begin{tabular}{| l | r | r | r |}
\hline
\hline
Algorithms $~~\cardV=6$  & Min  & Max  & Mean\\
\hline
DC-OMP 1& 1512 &  1680 &   1673 \\
DC-OMP 2& 193050 &  386100 &  328957\\
DJ-IST & 16828 & 34552  & 21750  \\
\hline
\hline
Algorithms  $~~\cardV=20$  & Min  & Max  & Mean\\
\hline
DC-OMP 1& 4480 &  5600 &  5570\\
DC-OMP 2&261320 &  522640 &  373687\\
DJ-IST & 61236  & 92624  & 69568 \\
\hline
\end{tabular} 
\label{tab:2}
\end{center}
\end{table}
\subsection{DJ-ADMM}\label{sub:ADMM}
In Section \ref{sub:why_not_admm}, we intuitively explained that replacing the IST step in DJ-IST (Step 7 in  Algorithm \ref{DJ-IST}) with faster Lasso decreasing algorithms is not expected to improve the performance.
We now show an example: we replace IST with  ADMM \cite{boy10},
The settings are as follows: $\lambda=1$, $\alpha=5\times 10^{-3}$. For each $v\in\V$,  we consider the augmented Lagrangian
\begin{equation}\label{lag}  
 \begin{split}
\mathcal{L}&(x_v,z_v;\mu_v)=\frac{1}{2}\left\|y_v-A_v x_v \right\|_2^2+\lambda\alpha\sum_{i=1}^n w_{v,i}| z_{v,i}|\\&+ \rho \mu_v^{\mathsf{T}}(x_v-z_v)+\rho \left\|x_v-z_v \right\|_2^2
 \end{split}
\end{equation}
where $\rho>0$ (here we fix $\rho=1$), $x_v$, $z_v$, $\mu_v \in \R^n$. Given $\mathrm{d}(\mu_v)=\min_{x_v,z_v}\mathcal{L}(x_v,z_v; \mu_v)$, at each step, ADMM  decreases the functional $\mathcal{L}(x_v,z_v; \mu_v)-2\mathrm{d}(\mu_v)$ \cite[Theorem 3.1]{luo12}. Specifically the ADMM step for Lasso is as follows (see \cite[Section 6.4]{boy10}):
\begin{equation*}
 \begin{split}
x_v(t+1) &=\argmin{x_v}\mathcal{L}(x_v,z_v(t);\mu_v(t))\\ &=(A_v^{\mathsf{T}}A_v+\rho I)^{-1}[A_v^{\mathsf{T}}y_v+\rho (z_v(t)-\mu_v(t))]\\
z_v(t+1)&=\argmin{z_v}\mathcal{L}(x_v(t),z_v;\mu_v(t))\\&=\soft_{\lambda\alpha w_v(t) / \rho}[x_v(t+1)+\mu_v(t)].\\
\mu_v(t+1)&=\mu_v(t)+x_v(t+1)-z_v(t+1).
 \end{split}
\end{equation*}
We name DJ-ADMM the algorithm that we obtain by replacing IST with ADMM in DJ-IST, with the usual forced stopping of the null components above a switch threshold $p$. In our simulations, we observed that no more than 5 switches from zero to non-zero occurred using DJ-ADMM, and  as for DJ-IST, in the practice we did not set $p$ in advance.

\begin{figure*}[ht]
\centering
\includegraphics[width=0.65\columnwidth]{./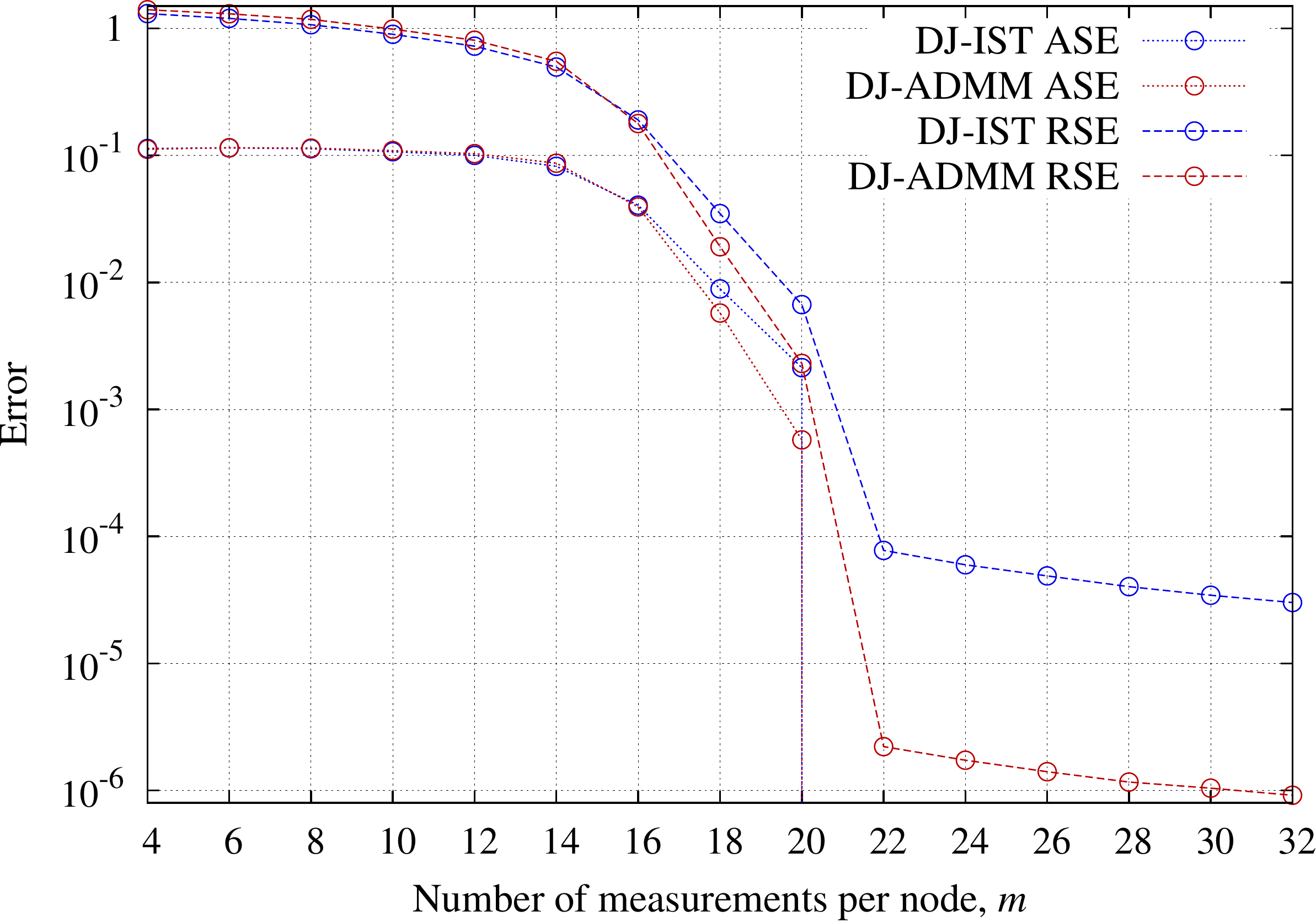}\quad
\includegraphics[width=0.65\columnwidth]{./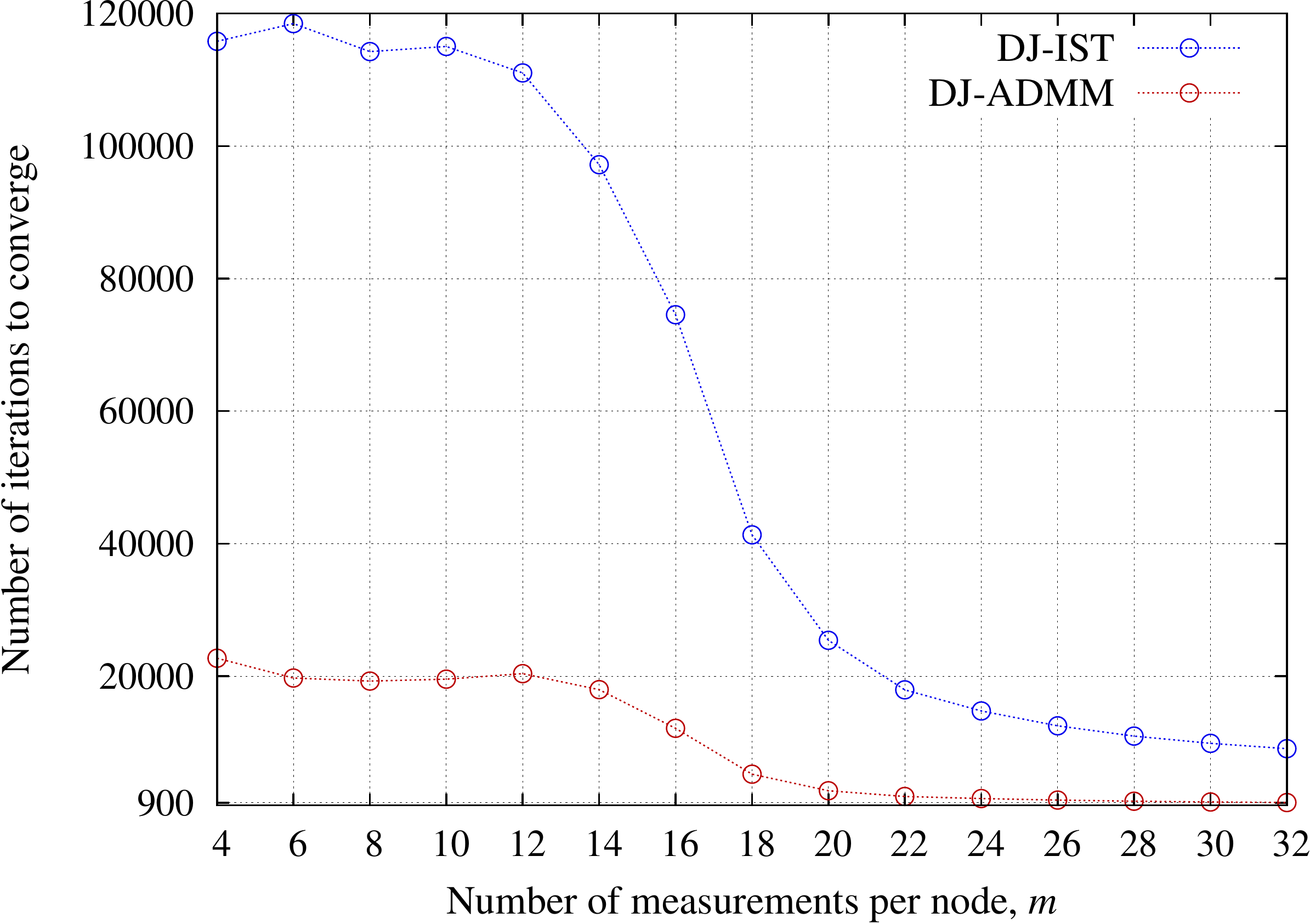} \quad
\includegraphics[width=0.65\columnwidth]{./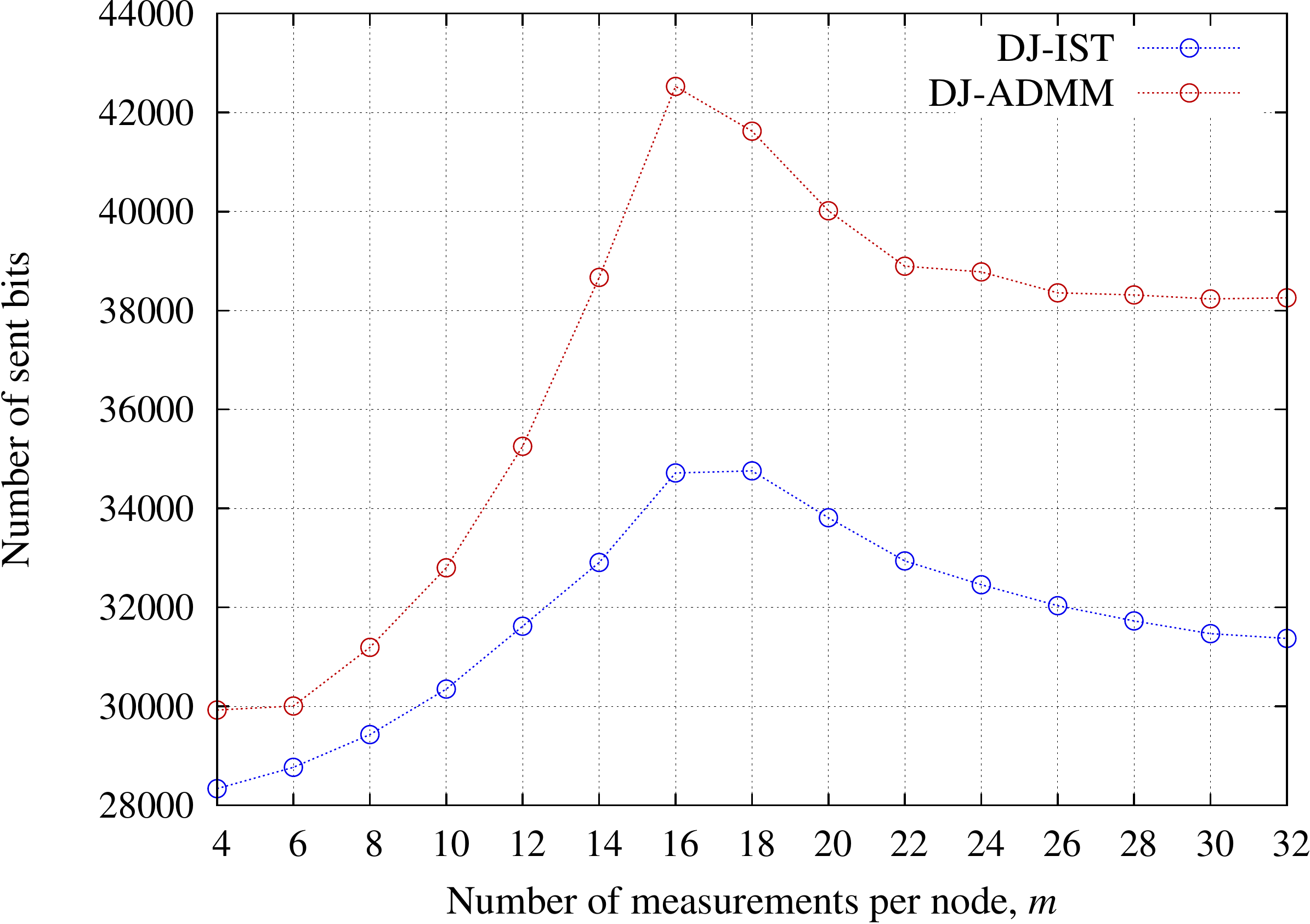} 
\caption{DJ-IST vs DJ-ADMM: ASE, RSE, number of iterations, and sent bits.}\label{fig:DJADMM}
\end{figure*}
In Figure \ref{fig:DJADMM} we compare DJ-IST and DJ-ADMM for varying  $m$, averaged over 250 runs. The setting is the one described in Section  \ref{sub:simulations_setting}, with regular topology with degree 5. First, we show that the support reconstruction accuracy, evaluated in terms of ASE, is very similar. When the support is exactly recovered, the RSE of DJ-ADMM achieves $10^{-6}$, while DJ-IST is around $10^{-5}$, due to the bias that can be evaluated from \eqref{puntofisso}.

We further observe that DJ-ADMM is much faster in terms of number of iterations (second graph of Figure \ref{fig:DJADMM}), but  requires a larger  number of bit transmissions (third graph). As already explained, this is expected as ADMM forces a faster decrease of the Lasso, which may produce conflicts with the information gathered from the network; the behavior of the single node is then too aggressive, which causes more switches, hence more transmissions, if compared to DJ-IST. However, the number of transmissions of DJ-ADMM is still of the order of DJ-IST. This makes DJ-ADMM suitable for those cases in which velocity is desired.

Regarding the number of transmitted bits, we remark the peak (for both DJ-IST and DJ-ADMM) for mid values of $m$. The reason is that when few measurements are available, each node has less information to communicate; on the other hand, many measurements allow a faster convergence and less transmissions. Thus, it is in the intermediate case that the network has its most intense activity.

\section{Conclusion}\label{sec:conclusion}
In this paper, we have proposed DJ-IST, a distributed soft thresholding algorithm to recover jointly sparse signals. The shrinkage thresholds  are reweighted at each step, based on information on the support coming from the network. DJ-IST estimates both the support and the non-zero values of the unknown signals. DJ-IST is proved to converge to a minimum of a suitable cost functional with concave penalization. Interestingly, DJ-IST can be interpreted as a distributed reweighted $\ell_1$ minimization algorithm. In terms of support recovery accuracy, DC-OMP 2  is the state-of-the-art method. Numerical simulations show that DJ-IST has a  performance close to DC-OMP 2, but significantly outperforms it in terms of transmission efficiency (namely, number of transmitted bits per link). On the other hand,  DC-OMP 1 is the state-of-the-art method in terms of transmission efficiency, but its performance is shown to be worse than DJ-IST. In conclusion, DJ-IST is an optimal trade-off between recovery performance and energy saving capability, which makes it more suitable than greedy procedures.

The scheme of DJ-IST seems to be applicable to other jointly sparse models, like JSM-1 and JSM-3 \cite{dua05}, that have been recently tackled with distributed algorithms \cite{mata14, mata15}. Moreover, we remark that DJ-IST could be used in case of recovery of a unique common signal \cite{rfm15} to improve the  transmission efficiency \cite{rav14icassp, rav15}:  sharing information  about the support instead of transmitting the whole signal's estimate may dramatically reduce the communication load. These points will be subject of our future work.

\bibliographystyle{plain} 

\end{document}